\documentclass[a4paper, 12pt]{amsart}
\usepackage[english]{babel}
\usepackage{amssymb,amstext}
\usepackage{amsthm}
\usepackage[latin1]{inputenc} 
\usepackage[T1]{fontenc}      
\usepackage{graphicx, xcolor}
\usepackage{textcomp}    
\date{\today}

\usepackage[top=1.5in, bottom=1.3in, left=1.1in, right=1.1in]{geometry}

\pagestyle{headings}

\setlength{\headsep}{25pt}

\newtheorem{theorem}{Theorem}
\newtheorem{corollary}[theorem]{Corollary}

\newtheorem{lemma}[theorem]{Lemma}
\newtheorem{proposition}[theorem]{Proposition}
\newtheorem{definition}[theorem]{Definition}
\newtheorem{assumption}[theorem]{Assumption}

\theoremstyle{definition}
\newtheorem{remark}[theorem]{Remark}




%

\newcommand{\xad}{x_\alpha^\delta}

\newcommand{\xdag}{x^\dagger}
\newcommand{\xdags}{\underline x^\dagger}

\newcommand{\yd}{y^\delta}
\newcommand{\range}{\mathcal{R}}

\newcommand{\supp}{\mathop{\mathrm{supp}}}
\newcommand{\sgn}{\mathop{\mathrm{sgn}}}

\newcommand{\R}{\mathbb{R}}
\newcommand{\N}{\mathbb{N}}
\newcommand{\Z}{\mathbb{Z}}

\newcommand{\1}{\ell^1(\N)}
\newcommand{\2}{\ell^2(\N)}
\newcommand{\3}{\ell^\infty(\N)}

\newcommand{\M}{\mathcal{M}}
\newcommand{\Mqw}{{\ensuremath{\mathcal{M}_{q,w}}}}
\newcommand{\Reg}{{\ensuremath{\mathcal{R}}}}
\newcommand{\Rqw}{{\ensuremath{\mathcal{R}_{q,w}}}}
\newcommand{\Rzw}{{\ensuremath{\mathcal{R}_{0}}}}
\newcommand{\Row}{{\ensuremath{\mathcal{R}_{1}}}}
\newcommand{\Rtw}{{\ensuremath{\mathcal{R}_{2}}}}
\newcommand{\RT}{{R}}

\newcommand\domain[1]{{{\mathcal{D}}(#1)}}

\title[On basis smoothness in sparsity regularization]{\bf On the interplay of basis smoothness and specific range conditions occurring in sparsity regularization}

\author{Stephan W. Anzengruber$^*$}
\address{Department of Mathematics, Chemnitz University of Technology,
  09107 Chemnitz,  Germany}
\email{\tt stephan.anzengruber@mathematik.tu-chemnitz.de}

\author{Bernd Hofmann}
\address{Department of Mathematics, Chemnitz University of Technology,
  09107 Chemnitz,  Germany}
\email{\tt bernd.hofmann@mathematik.tu-chemnitz.de}

\author{Ronny Ramlau}
\address{Johannes Kepler University, Industrial Mathematics Institute, Altenbergerstra{\ss}e 69, 4040 Linz, Austria}
\address{Johann Radon Institute for Computational and Appied Mathematics, Altenbergerstra{\ss}e 69, 4040 Linz, Austria}
\email{\tt ronny.ramlau@oeaw.ac.at}

\date{} 

\keywords{Linear ill-posed problems, sparsity, Tikhonov regularization,
    $\ell^q$-regularization, convergence rates, Gelfand triple, variational inequalities,
    source conditions, regularization parameter choice}

\subjclass[2010]{65J20, 47A52, 44A12, 49J40}

\thanks{$^*$Corresponding author.}

\begin{document}

\begin{abstract}
The convergence rates results in $\ell^1$-regularization when the sparsity assumption is narrowly missed, presented by Burger et al.~(2013 {\sl Inverse Problems} {\bf 29} 025013), are based on a crucial condition which requires that all basis elements belong to the range of the adjoint of the forward operator. Partly it was conjectured that such a condition is very restrictive.
In this context, we study sparsity-promoting varieties of Tikhonov regularization for linear ill-posed problems with respect to an orthonormal basis in a separable Hilbert space using $\ell^1$ and sublinear penalty terms. In particular, we show that the corresponding range condition is always satisfied for all basis elements if the problems are well-posed in a certain weaker topology and the basis elements are chosen appropriately related to an associated Gelfand triple.
The Radon transform, Symm's integral equation and linear integral operators of Volterra type are examples for such behaviour, which allows us
to apply convergence rates results for non-sparse solutions, and we further extend these results also to the case of non-convex $\ell^q$-regularization with $0<q<1$.
\end{abstract}

\maketitle

\section{Introduction}\label{s1}
\setcounter{equation}{0}
\setcounter{theorem}{0}

In recent years and first motivated by the seminal paper \cite{Daub03}, Tikhonov regularization for the stable approximate solution of ill-posed operator equations in Banach spaces based on $\ell^q$-norm-type penalties has been of considerable interest. This method is particularly suitable for situations where the unknown solution is sparse with respect to a given basis, because for $0 < q \leq 1$ the regularized solutions are themselves necessarily sparse and for $q=1$ convergence rates proportional to the noise level could be shown under standard source conditions in \cite{GrasHaltSch08}.
Regularization under sparsity constraints plays a prominent role for mathematical models in various fields like imaging (cf., e.g., \cite{Scherzetal09}) and parameter identification in partial differential equation problems (cf., e.g.,~\cite{JinMaass12}).
If the sparsity assumption fails, but the decay of the coefficients is fast enough to ensure $\ell^1$-solutions, then first convergence rates results for linear forward operators were developed in \cite{BFH13}, even in the absence of standard source conditions and approximate source conditions.
These results are valid under certain requirements on the interplay of the forward operator and the basis, the strongest of which is a sequence of range conditions assuming that all basis elements are in the range of the adjoint of the forward operator. At first glance this condition resembles properties of a singular value decomposition (SVD) and it is, indeed, satisfied if the forward operator maps compactly between two separable Hilbert spaces and the basis under consideration is precisely the orthonormal basis of eigenelements.

The purpose of this paper is twofold. First we establish that the aforementioned range conditions allow for a much larger class of operators and bases than just SVD-type and diagonal operator situations. To be precise, after introducing the used setting in Section~\ref{sec:preli} we show in Section~\ref{sec:gelfand} that the basis range conditions are always satisfied if the forward operator is continuously invertible with respect to a weaker topology -- defined in terms of a rigged Hilbert space and the corresponding Gelfand triple -- and the chosen basis for $\ell^q$-regularization is smooth enough.
Moreover, we give examples for such behaviour in Section~\ref{sec:gelfand}. Secondly, in Section \ref{sec:nonconvex} we turn to the case of non-convex regularization with $0<q<1$, where convergence rates are available from \cite{BreLor09, Grasm09} if the solution is sparse. However, if the sparsity assumption fails we derive new convergence rates results along the lines of \cite{BFH13}. Ideas for some 
extensions to frames and nonlinear operators are collected in Section~\ref{sec:extensions}.

\section{Preliminaries} \label{sec:preli}
\setcounter{equation}{0}
\setcounter{theorem}{0}

The focus of our study is on the treatment of specific \emph{ill-posed} operator equations
\begin{equation} \label{eq:opeq}
A x \,=\,y, \qquad x \in X ,\quad y \in Y,
\end{equation}
which serve as models for linear inverse problems arising in natural sciences, engineering, and finance. In this context, $A: X \to Y$ mapping between the infinite dimensional separable real Hilbert space $X$ and the Banach space $Y$ represents a \emph{bounded injective linear} forward operator with non-closed range  $\range(A)\not=\overline {\range(A)}^Y$. In the sequel, we denote by $\langle \cdot,\cdot\rangle_X$ and $\langle \cdot,\cdot\rangle_{Y^* \times Y}$ the inner product in $X$ and the duality pairing between $Y$ and its dual space $Y^*$, respectively, as well as by $\|\cdot\|_{X},\;\|\cdot\|_{Y}$ the corresponding norms in $X$ and $Y$. Taking into account that the
Hilbert space $X$ and its dual can be identified, the adjoint operator $A^*: Y^* \to X$ of $A$ is defined by the equation $\langle f, Ax \rangle_{Y^* \times Y} = \langle A^*f,x \rangle_X $ for all $x \in X$ and all $f \in Y^*$.
Since the Hilbert space $X$ is separable, there exist complete orthonormal systems (orthonormal bases) in $X$. Throughout the paper, we fix such an orthonormal basis $\{u_k\}_{k \in \N}$ of $X$, and we collect for any $x \in X$ the Fourier coefficients $\underline x_k=\langle x,u_k \rangle_X,\;k \in \N$, in an infinite sequence $\underline x:=\left(\underline x_1, \underline x_2,...\right)$.
By setting $\|\underline x\|_{q,w}:=\left( \sum \limits_{k=1}^\infty w_k |\underline x_k|^q\right)^{1/q}$ for exponents $0<q<\infty$ and with fixed weights $0 < w_0 \leq w_k$ for all $k \in \N$, which for $q \ge 1$ is the norm of the Banach space $\ell_w^q(\N)$, we introduce a scale of nested and with respect to $q$ monotonically increasing subsets
\begin{equation} \label{eq:scale}
\Mqw:=\{x \in X: \;\|\underline x\|_{q,w}<\infty \}, \qquad 0<q \le 2,
\end{equation}
of the Hilbert space $X$. Without going into detail, we mention that Besov spaces are of this particular form if the coefficients $\underline x$ are taken with respect to a wavelet basis and the weights are chosen appropriately (see, e.g. \cite{Mey}).
Since $X$ is isometrically isomorphic to $\ell^2(\N)$ with Parseval's identity $\|x\|_X=\|\underline x\|_2$, we have $\M_2=X$ as upper limit of the scale (\ref{eq:scale}), where as a rule we omit the index $w$ to indicate that $w_k = 1$ for all $k \in \N$.
 On the other hand, by introducing the support of $x \in X$ with respect to $\{u_k\}_{k \in \N}$ as $$\supp (x) := \{ k \in \N : \, \underline x_k \not= 0 \}$$ and $\,\|\underline x\|_0:= \sum \limits _{k=1}^\infty \sgn(|\underline x_k|)\,$
we find the lower limit of the scale (\ref{eq:scale}) as the set of \emph{sparse} elements, i.e.~of elements with \emph{finite support}  with respect to  $\{u_k\}_{k \in \N}$,
$$\M_0:=\{x \in X:\;\|\underline x\|_0<\infty\}.$$

The supposed specific character of the uniquely determined solution $\xdag \in X$ to (\ref{eq:opeq}) for $y \in \range(A)$  consists in the fact that either a sparsity assumption holds or that such an assumption is narrowly missed in the sense that $\langle \xdag,u_k\rangle_X$ decays to zero sufficiently fast as $k \to \infty$, or more precisely that $\xdag \in \Mqw$ for some $q>0$. This character motivates the use of $\ell^q_w$-regularization for computing stable approximate solutions to the ill-posed problem (\ref{eq:opeq}), where instead of $y$ only noisy data $y^\delta \in Y$ with $\|y-y^\delta\|_Y \le \delta$ for a given noise level $\delta>0$ are available. The regularized solutions $\xad \in X$ are minimizers of the Tikhonov functional $T_\alpha^\delta: X \to [0, \infty]$  defined as
\begin{equation} \label{eq:Tik}
T_\alpha^\delta(x):=\frac{1}{p} \|Ax-\yd\|_Y^p + \alpha \Rqw (x),
\end{equation}
where the penalty $\Rqw$ is defined as
\begin{equation} \label{eq:q}
 \Rqw (x) = \sum \limits _{k=1}^\infty w_k \left|\langle x,u_k \rangle_X \right|^q,  \qquad 0 < q \leq 1.
 \end{equation}

\subsection{Well-posedness and convergence}

For all $\yd \in Y$ as well as for all regularization parameters $\alpha>0$ minimizers $\xad \in X$ of the Tikhonov functional exist and are sparse, i.e.~$\xad \in \M_0$, and we refer for proof details to \cite{Grasm09} and
\cite[Prop.~4.5]{Gras10} for the case $0<q<1$ and to \cite[Lemma~2.1]{Lorenz08} for the case $q=1$. The existence of minimizers to $T_\alpha^\delta$ from (\ref{eq:Tik}) is based on the fact that the functional $\Rqw$ from (\ref{eq:q}) is weakly lower semi-continuous in $X$ and \emph{stabilizing},
which means that for all constants $c \ge 0$ the sublevel sets $\{x \in X: \Rqw(x) \le c\}$ are weakly sequentially compact in $X$. This stabilizing property follows from the weak coercivity of $\Rqw(x)$, i.e.~for all $x$ from the sublevel sets of $\Rqw$ the norm $\|x\|_X$ is uniformly bounded (cf.~\cite[Prop.~3.3]{Gras10}). Here, we also mention that  $\Row (x) \le \bigg( \frac{1}{w_0} \Rqw(x) \bigg)^{1/q}$ for all $0<q<1$.
In \cite{BreLor09,Grasm09,Gras10} one can find assertions on stability and convergence of such regularized solutions. If $\xdag \in \Mqw$ and the elements $x_n:=x_{\alpha_n}^{\delta_n}$ are regularized solutions for data $y^{\delta_n}\in Y$ satisfying $\lim \limits_{n \to \infty} \delta_n =0$, then we have convergence as $\lim \limits_{n \to \infty} \|x_n-\xdag\|_X =0$ whenever the regularization parameters $\alpha_n>0$ fulfil
the conditions $\lim \limits_{n \to \infty} \alpha_n=0$ and $\lim \limits_{n \to \infty} \frac{\delta^p_n}{\alpha_n}=0$. This norm convergence in $X$ follows from three ingredients which are all available:
  \begin{enumerate}
    \item the weak convergence of regularized solutions in $X$, which is a consequence of the weak compactness of the sublevel sets,
    \item the convergence $\lim \limits_{n \to \infty}\Rqw(x_n) =\Rqw(\xdag)$, which is a general property under the required behaviour of the regularization parameters, and
    \item the Kadec-Klee or Radon-Riesz property (cf.~\cite[Prop.~3.6]{Gras10}).
  \end{enumerate}

\subsection{Convergence rates}

Assuming sparsity, i.e.~if $\xdag \in \M_0$, numerous papers (cf., e.g.,~\cite{BreLor09,Lorenz08,Grasm09,Gras10}) even provide us with results on \emph{convergence rates}. A first convergence rate result when $\xdag \in \M_1$, but the sparsity assumption fails, was presented and proven in \cite{BFH13} and corresponding results for nonlinear forward operators were formulated in \cite{BoHo13}.

 We extend these results in Section~\ref{sec:nonconvex} to regularization with penalty term $\Rqw(x)$ also for exponents $0<q<1$. For appropriately chosen regularization parameters $\alpha_*=\alpha_*(\delta,y^\delta)$ we are interested in convergence rates of the form
\begin{equation} \label{eq:genrate}
E(x_{\alpha_*}^\delta,\xdag) = O(\varphi(\delta)) \qquad \mbox{as} \qquad \delta \to 0,
\end{equation}
with \emph{error measures} $E(\cdot,\cdot)$ and  concave \emph{index functions} $\varphi$, i.e. increasing functions $\varphi:~(0,\infty) \to (0,\infty)$ with $\lim \limits_{t \to +0} \varphi(t)=0$.
In agreement with previous works on convergence rates in $\ell^q$-regularization, we preferably consider the error measures
\begin{equation}\label{eq:Eq}
E(x,\xdag)=\Rqw (x-\xdag)=\|\underline x-\underline x^\dagger\|^q_{q,w}, \qquad \mbox{with } q \mbox{ from } (\ref{eq:Tik}),
\end{equation}
and
\begin{equation}\label{eq:E1}
E(x,\xdag)=\mathcal R_{1,w} (x-\xdag)=\|\underline x-\underline x^\dagger\|_{1,w}
\end{equation}
for arbitrary $0<q \le 1$ in the penalty term. If for such an error measure a \emph{variational inequality}
\begin{equation} \label{eq:vi}
 E(x,\xdag) \le \Rqw(x) - \Rqw (\xdag) + \varphi\left(\|A(x-\xdag)\| \right) \qquad \mbox{for all }  x \in \Mqw
\end{equation}
is valid with some concave index function $\varphi$, then we obtain a convergence rate (\ref{eq:genrate})
for the regularized solutions $x_{\alpha_*}^\delta$, for example by choosing the regularization parameter $\alpha_*=\alpha(\delta, \yd)$
 a-posteriori according to the
\emph{sequential discrepancy principle} (cf.~\cite{AnzHofMath12, HofMat12}). So far the discrepancy principle has mostly been studied in the context of convex regularization. However, in the recent publication \cite{AnzHofMath12} the most important results are carried over also to non-convex regularization, for example with the sublinear $\Rqw$ penalties which we consider here. For more details concerning the role of
variational inequalities for convergence rates in Tikhonov regularization we refer to \cite{HKPS07} and  \cite{AR2, Fle12, Flemmingbuch12, Scherzetal09, SchKHK12}.
Proposition~5.6 in \cite{BFH13} motivates, however, that the variational inequalities approach fails in $\ell^1$-regularization for non-injective forward operators. Therefore,
we also exclude non-injective operators $A$ in the present study.

The following assumption is the key ingredient for the derivation of convergence rates (\ref{eq:genrate}) in Section \ref{sec:nonconvex} (compare also \cite{BFH13, BoHo13}).  It combines smoothness of the basis elements $u_k$ expressed as a range inclusion with the smoothness of the solution $\xdag$ in terms of a rate of decay of its Fourier coefficients.

\begin{assumption} \label{ass:basic}
\begin{itemize} \item[]
\item[(a)] Suppose that the uniquely determined solution $\xdag \in X$ of equation (\ref{eq:opeq}) with $y \in \range(A)$ satisfies the condition $\xdag \in \Mqw$ for prescribed $0<q \le 1$.
\item[(b)] For all $k \in \supp (\xdag)$ there exist $f_k \in Y^*$ such that $u_k=A^*f_k$.
\end{itemize}
\end{assumption}

\smallskip

\begin{remark} \label{rem:smooth}
As is well-known any convergence rate in regularization of ill-posed operator equations in abstract spaces is connected with some kind of \emph{solution smoothness} (see, e.g. \cite{BakuKok04,EHN96,Scherzetal09,SchKHK12}), where \emph{source conditions} are the prominent form of expressing such smoothness. As one easily verifies, a source condition of type
\begin{equation} \label{eq:canon}
\xdag =A^*f, \qquad f \in Y^*,
\end{equation}
is an immediate consequence of Assumption~\ref{ass:basic} (b) whenever the solution is sparse, i.e.~if $\xdag \in \M_0$. Therefore basis range inclusions $u_k \in \range (A^*)$ play an important role in sparsity-promoting regularization, and we refer to the papers \cite{BreLor09, Grasm09, GrasHaltSch08, Lorenz08, Wangetal13}. In \cite[Sect.~4]{BFH13} it was discussed that source conditions and even approximate source conditions (cf.,~e.g.,~\cite{BoHo10,HeinHof09} and\cite[Part~III]{Flemmingbuch12}) always fail in $\ell^1$-regularization under Assumption~\ref{ass:basic} if the sparsity assumption is missed, i.e.~if $\xdag \notin \M_0$ and hence the support of $\xdag$ is an infinite
set of indices $k \in \N$. Nevertheless, for $q=1$ and $E$ from (\ref{eq:E1}) convergence rates (\ref{eq:genrate}) were derived for that case under Assumption~\ref{ass:basic}. Precisely, the two conditions $\xdag \in \M_{1,w}$
and Assumption~\ref{ass:basic} (b) represent the solution smoothness of $\xdag$ with respect to the forward operator $A$ in $\ell^1$-regularization of linear ill-posed operator equations, and their interplay determines the occurring convergence rates. We will see
in Section~4 that this is also the case in $\ell^q_w$-regularization, $0<q <1$, where the interplay of Items (a) and (b) yields convergence rates (\ref{eq:genrate}) with respect to $E$ from (\ref{eq:Eq}).
\end{remark}

\subsection{Chances and limitations}

As a consequence of the Closed Range Theorem (cf., e.g., \cite[p.~205]{Yos80}) the range $\range(A^*)$ is a non-closed subset of $X$, because $\range(A)$ is a non-closed subset of $Y$. Even though we have
$\overline{\range(A^*)}^X=X$ due to the injectivity of $A$, Assumption~\ref{ass:basic} (b) is a rather strong requirement, which together with the condition $\xdag \in \Mqw$ represents the
solution smoothness of $\xdag$ with respect to $A$ in $\ell^q_w$-regularization. Evidently, Item (b) can certainly be interpreted as an countably infinite set of smoothness conditions concerning the basis elements $u_k$.
In \cite[Remark 2.9]{BFH13} it was conjectured that all such conditions refer to a situation not too far from diagonal operators and the singular value decomposition. Really, in a Hilbert space setting
Item (b) is satisfied if $\{u_k\}_{k \in \N}$ acts as basis of eigenelements for $A$. If, however, the eigenbasis of $A$ occurs after rotating $\{u_k\}_{k \in \N}$ by means of a unitary operator $U: X \to X$, then
Item (b) will often get violated. A simple counterexample of an injective and compact linear operator $A$ with bidiagonal structure mapping in the separable Hilbert space $X$ with orthonormal basis $\{u_k\}_{k \in \N}$, where this item fails, was suggested by Markus Hegland (ANU, Canberra) as $Au_1:=u_1$ and $Au_k\:=\frac{u_k}{k}-\frac{u_{k-1}}{k-1},\; k=2,3,...$. Then we obviously have $u_1 \notin \range(A^*)$.
Roughly speaking, Assumption~\ref{ass:basic} (b)  will only hold if the cross connections between $\{u_k\}_{k \in \N}$ and the eigenbasis are not too turbulent. We will show in Section~3 that this is the case for a wide class of linear ill-posed problems, which are well-posed in a weaker topology, and for which then the convergence rates results of Section~4 are applicable whenever the basis elements  $u_k$ are smooth enough. It is future work to develop convergence rates if not all basis elements $u_k$ are smooth enough and hence Item (b) is violated, and we refer to \cite{FHH13} for preliminary results.

\section{Range inclusions, Gelfand triples, and examples} \label{sec:gelfand}
\setcounter{equation}{0}
\setcounter{theorem}{0}

\subsection{Range inclusions and Gelfand triples}

For the rate results from \cite{BFH13} and those which will be formulated in Section~\ref{sec:nonconvex}, the requirement $u_k \in \range(A^*),$ for all $k \in \N$, from Assumption~\ref{ass:basic} (b) is crucial.
So it is important to develop \emph{sufficient conditions} that allow us to construct a corresponding basis $\{u_k\}_{k \in \N}$ satisfying that point. In the simplest case one can directly characterize the subspace $\range(A^*)$ of the Hilbert spaces $X$ and choose a basis with suitable properties. Alternatively, under Assumption~\ref{ass:basic1} the following Propositions~\ref{pro:prop1} and \ref{pro:exist} provide us with an approach which is based on a range inclusion exploiting a space $V$ with stronger norm and continuously and densely embedded in $X$. As we will see below the premises of these propositions are fulfilled
for wide classes of practically relevant linear ill-posed operator equations. For example, smoothness along the lines of Assumption~\ref{ass:basic} (b) appears as a natural property if the forward operator $A$ is \emph{continuously invertible} under a weaker topology in the context of Gelfand triples whenever the basis elements $u_k$ are chosen in an adapted manner.

\begin{assumption} \label{ass:basic1}
  Suppose that the Hilbert space $X$ admits a separable linear subspace $V$ with norm $\|.\|_{V}$ which is dense in $X$, i.e.~$\overline{V}^X=X$, and such that
  \begin{enumerate}
   \item the corresponding linear embedding operator $\mathcal{E}: V \to X$ is continuous, i.e. with some constant $C>0$
      \begin{equation} \label{eq:emb}
	\|\mathcal{E} v\|_X \le C\,\|v\|_{V} \qquad \mbox{for all} \quad v \in V;
      \end{equation}
    \item the duality pairing between $V$ and its dual space $V^*$ is compatible with $\langle ., . \rangle_X$ in the sense that
      \begin{equation} \label{eq:comp}
	\langle x, v \rangle_{V^* \times V} = \langle x, v \rangle_X \qquad \mbox{for all} \quad x \in X,\; v \in V.
      \end{equation}
  \end{enumerate}
  \end{assumption}

\medskip

\begin{remark} \label{rem:Gelfand}
The pair $(V,X)$ of spaces satisfying the requirements of Assumption~\ref{ass:basic1} is called \emph{rigged Hilbert space}, and if we identify $X$ with its dual space $X^*$ by means of the Riesz isomorphism, the triple $(V,X,V^*)$ is called \emph{Gelfand triple}.
It is not difficult to construct Gelfand triples. Particular examples are $(\ell^q(\N), \ell^2(\N), \ell^{q'}(\N))$ and $(L^{q'} (\Omega), L^2(\Omega), L^q(\Omega))$ with $1<q<2$, $q'=q/(q-1)$ and bounded $\Omega \subset \R^n$ as well as $(H_0^s (\Omega), L^2 (\Omega), H^{-s} (\Omega))$ with $s>0$ and $\Omega \subset \R^n$ with sufficiently smooth boundary $\partial \Omega$. Observe that the compatibility condition (\ref{eq:comp}) ensures that the adjoint $\mathcal{E}^*$ of the embedding $\mathcal{E}$ is in turn the embedding of $X$ in $V^*$.
\end{remark}

We now turn to the main result of this section establishing a link between the range inclusion in Assumption~\ref{ass:basic} (b) and a smoothness condition $u_k \in V$ for all $k \in \supp (\xdag)$.  This is to say that all active elements of the chosen basis in $X$ must even belong to the smaller subspace $V$ possessing a stronger norm and turns out to be sufficient for Assumption~\ref{ass:basic} (b) to hold whenever $A$ is continuously invertible as an extension from $X$ to $V^*$. Thus, if the norm topology in $X$ is weakened to the norm topology in $V^*$ then the ill-posedness is lost.

\begin{proposition} \label{pro:prop1}
Under Assumption \ref{ass:basic1}, i.e.~if  $(V,X,V^*)$ is a Gelfand triple, assume that there is a constant $K>0$ such that
\begin{equation} \label{eq:K}
\|Ax\|_Y \ge K\,\|\mathcal{E}^*x\|_{V^*}  \qquad \mbox{for all}\qquad x \in X,
\end{equation}
then the range inclusion
\begin{equation} \label{eq:ri}
V \subseteq \range(A^*)
\end{equation}
 is valid. Hence, for any orthonormal basis $\{u_k\}_{k \in \N}$ in $X$ such that $u_k \in V$ for all $k \in \supp (\xdag)$ Assumption~\ref{ass:basic} (b) is satisfied.
\end{proposition}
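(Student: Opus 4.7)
The plan is to use the standard duality characterization of membership in the range of the adjoint, combined with a Hahn--Banach extension argument. Specifically, for fixed $v \in V$, I want to produce $f \in Y^*$ with $A^*f = v$ by first constructing a candidate linear functional on $\range(A)$ and then extending it.

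First I would define, for $v \in V$ fixed, the linear functional $\ell : \range(A) \to \R$ by $\ell(Ax) := \langle v, x\rangle_X$. Since $A$ is injective (recall the standing injectivity hypothesis on $A$), this is unambiguously defined on $\range(A)$. The next step is the key estimate: for any $x \in X$ I rewrite the inner product using the Gelfand triple. By the compatibility condition (\ref{eq:comp}) and the fact that $\mathcal{E}^*$ is the embedding $X \hookrightarrow V^*$ (as noted in Remark~\ref{rem:Gelfand}), one has
\begin{equation*}
\langle v, x\rangle_X \;=\; \langle x, v\rangle_X \;=\; \langle \mathcal{E}^* x, v\rangle_{V^* \times V},
\end{equation*}
so that $|\langle v, x\rangle_X| \le \|\mathcal{E}^* x\|_{V^*}\,\|v\|_V$. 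Invoking the hypothesis (\ref{eq:K}) then yields
\begin{equation*}
|\ell(Ax)| \;\le\; \frac{\|v\|_V}{K}\,\|Ax\|_Y \qquad \text{for all } x \in X,
\end{equation*}
which shows that $\ell$ is a bounded linear functional on the (generally non-closed) subspace $\range(A) \subseteq Y$, with norm bounded by $\|v\|_V/K$.

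Now I would apply the Hahn--Banach theorem to extend $\ell$ to a bounded linear functional $f \in Y^*$ of the same norm. By the very definition of the adjoint, one then has, for every $x \in X$,
\begin{equation*}
\langle A^* f, x\rangle_X \;=\; \langle f, Ax\rangle_{Y^* \times Y} \;=\; \ell(Ax) \;=\; \langle v, x\rangle_X,
\end{equation*}
and since this holds for all $x \in X$ (and $X$ is Hilbert), one concludes $A^*f = v$. This establishes $V \subseteq \range(A^*)$. The final assertion of the proposition, namely that Assumption~\ref{ass:basic}(b) holds whenever $u_k \in V$ for all $k \in \supp(\xdag)$, is then immediate from (\ref{eq:ri}).

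I do not expect a serious obstacle here; the only subtle point is to make sure the compatibility condition (\ref{eq:comp}) is applied correctly so that the bracket $\langle v,x\rangle_X$ is genuinely realized as the $V^*$--$V$ pairing between $\mathcal{E}^* x$ and $v$, which is what makes the estimate (\ref{eq:K}) usable. Everything else is a clean Hahn--Banach/duality routine.
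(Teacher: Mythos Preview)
Your proof is correct and follows essentially the same route as the paper: the central estimate $|\langle x,v\rangle_X|\le \frac{\|v\|_V}{K}\|Ax\|_Y$ is obtained identically via the compatibility of the Gelfand triple and hypothesis~(\ref{eq:K}). The only difference is cosmetic: the paper packages the passage from this estimate to $v\in\range(A^*)$ into an auxiliary lemma (Lemma~\ref{lem:aux}, quoted from \cite{Scherzetal09}), whose proof is precisely the Hahn--Banach extension argument you spell out directly.
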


In order to prove Proposition~\ref{pro:prop1} we exploit the following technical lemma which was formulated and proven for general normed linear spaces as Lemma~8.21 in \cite{Scherzetal09}.

\begin{lemma} \label{lem:aux}
Let $A: X \to Y$ denote a bounded linear operator mapping between the real Hilbert spaces $X$ and $Y$ and let $u \in X$. Then the conditions $u \in \range(A^*)$ and
  \begin{equation} \label{eq:topr}
    \vert\langle x,u\rangle_X \vert \le C_u\,\|Ax\|_Y
  \end{equation}
  for some constant $C_u>0$ and for all $x \in X$ are equivalent.
\end{lemma}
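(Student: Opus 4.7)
The plan is to prove the equivalence by addressing the two directions separately. The forward direction, from $u \in \range(A^*)$ to the estimate \eqref{eq:topr}, is the easy one: if $u = A^*f$ for some $f \in Y^*$, then for every $x \in X$ the definition of the adjoint gives $\langle x, u\rangle_X = \langle A^*f, x\rangle_X = \langle f, Ax\rangle_{Y^*\times Y}$, and the standard duality bound yields $|\langle x,u\rangle_X| \le \|f\|_{Y^*}\,\|Ax\|_Y$, so one can take $C_u := \|f\|_{Y^*}$.

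For the converse, I would use estimate \eqref{eq:topr} to construct the preimage $f$ via a Hahn--Banach extension argument. First, define a linear functional $\ell$ on the (generally non-closed) subspace $\range(A) \subseteq Y$ by
\begin{equation*}
\ell(Ax) := \langle x, u\rangle_X, \qquad x \in X.
\end{equation*}
The key point is that $\ell$ is well-defined: if $Ax_1 = Ax_2$, then \eqref{eq:topr} applied to $x_1 - x_2$ gives $|\langle x_1 - x_2, u\rangle_X| \le C_u\,\|A(x_1-x_2)\|_Y = 0$, so $\langle x_1, u\rangle_X = \langle x_2, u\rangle_X$. Linearity is immediate, and \eqref{eq:topr} also shows that $|\ell(Ax)| \le C_u \|Ax\|_Y$, so $\ell$ is continuous on $\range(A)$ with norm at most $C_u$.

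By the Hahn--Banach theorem, $\ell$ admits a norm-preserving extension to a bounded linear functional $f \in Y^*$ satisfying $\|f\|_{Y^*} \le C_u$ and $f(Ax) = \langle x, u\rangle_X$ for every $x \in X$. Using the definition of the adjoint once more, this reads $\langle A^*f, x\rangle_X = \langle x, u\rangle_X$ for all $x \in X$, and since $X$ is a Hilbert space this forces $A^*f = u$, establishing $u \in \range(A^*)$.

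The main obstacle, and really the only nontrivial step, is the well-definedness of $\ell$ on $\range(A)$: one must recognize that the estimate \eqref{eq:topr} is precisely what ensures that $\nullspace(A) \subseteq \nullspace(\langle \cdot, u\rangle_X)$, so that $\ell$ descends to a functional on the quotient $X/\nullspace(A) \cong \range(A)$. Once this is in place, the rest is a routine application of Hahn--Banach and the definition of the adjoint. Since the argument uses only the Hilbert space structure of $X$ and the Banach space structure of $Y$ together with Hahn--Banach, it reproduces exactly the content of Lemma~8.21 of \cite{Scherzetal09} cited in the text.
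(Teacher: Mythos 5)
Your proof is correct: both directions are sound, the well-definedness and boundedness of the functional $\ell$ on $\range(A)$ are exactly what \eqref{eq:topr} provides, and the Hahn--Banach extension argument is the standard route. The paper itself gives no proof but defers to Lemma~8.21 of \cite{Scherzetal09}, whose argument is essentially the one you reproduce, so there is nothing to add.
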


\noindent {\bf Proof of Proposition~\ref{pro:prop1}.}
  For arbitrary $v \in V$ it follows from (\ref{eq:K}) that
    $$\vert\langle x, v \rangle_X\vert = \vert\langle x, \mathcal{E} v \rangle_X\vert=\vert\langle \mathcal{E}^* x,v\rangle_{V^* \times V}\vert \le \|v\|_{V}\|\mathcal{E}^* x\|_{V^*}\le \frac{\|v\|_{V}}{K}\,\|Ax\|_Y $$
  holds for all $x \in X$. This is (\ref{eq:topr}) with $C_v= \frac{\|v\|_{V}}{K}$, and Lemma~\ref{lem:aux} yields the range inclusion (\ref{eq:ri}).
\hfill\fbox{}

\bigskip

It is important to mention that Proposition \ref{pro:prop1} applies for all problems where the forward operator $A: X \to Y$ fails to be continuously invertible, but in terms of a Gelfand triple $(V,X,V^*)$ can be extended to $V^*$ such that $A: V^* \to Y$ is a continuous linear isomorphism from $V^*$ onto $Y$. As the subsequent series of examples will show, such isomorphisms occur for wide classes of problems. Precisely, we can make use of inequality chains of the form
  \begin{equation} \label{eq:isom}
      \underline c \| x \|_{V^*} \leq \| A x \|_Y \leq \overline c \| x \|_{V^*} \qquad \mbox{for all} \quad x \in X
    \end{equation}
with constants $0<\underline c \le \overline c<\infty$, where the left hand side inequality yields the required estimate (\ref{eq:K}) with $K:=\underline c$.

\medskip

  To clarify the implications of the above considerations for convergence rates we roughly summarize some facts which will be proven in extended form in Corollary~\ref{cor:sdp} below. In particular, the explicit structure of the rate function $\varphi$ as well as an appropriate choice of the regularization parameter will be supplied ibidem.

  \begin{theorem} \label{thm:summary}
  Suppose that $(V,X,V^*)$ is a Gelfand triple and that an inequality (\ref{eq:K}) holds for the forward operator $A$. If the orthonormal basis $\{u_k\}_{k \in \N}$ in $X$ satisfies $u_k \in V$ for all $k \in \supp (\xdag)$, the solution satisfies $\xdag \in \M_1$ and if the regularization parameter $\alpha > 0$ is chosen appropriately, then there is a concave index function $\varphi$ such that
    $$\| \underline x_{\alpha}^\delta- \underline x^\dag\|_{\1} = \mathcal{O}(\varphi(\delta)) \qquad \mbox{as} \qquad \delta \to 0.$$
  \end{theorem}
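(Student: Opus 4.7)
The plan is to chain together the two main tools already developed in the paper: Proposition~\ref{pro:prop1}, which converts the smoothness/invertibility setup into the range inclusion demanded by Assumption~\ref{ass:basic}(b), and the variational-inequality machinery for $\ell^1$-regularization from \cite{BFH13}, from which the convergence rate follows via a standard parameter choice.

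First, I would verify Assumption~\ref{ass:basic} under the present hypotheses. The assumption $\xdag \in \M_1$ is precisely Assumption~\ref{ass:basic}(a) with $q=1$. For Assumption~\ref{ass:basic}(b), since $(V,X,V^*)$ is a Gelfand triple and (\ref{eq:K}) holds, Proposition~\ref{pro:prop1} yields $V \subseteq \range(A^*)$. Combined with the hypothesis that $u_k \in V$ for every $k \in \supp(\xdag)$, this produces $f_k \in Y^*$ with $u_k = A^*f_k$ for all such $k$, i.e.\ exactly Assumption~\ref{ass:basic}(b).

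Next, with both parts of Assumption~\ref{ass:basic} in force (for $q=1$) and $A$ injective, I would invoke the convergence rate result of \cite{BFH13} for $\ell^1$-regularization: it provides a concave index function $\varphi$ such that the variational inequality (\ref{eq:vi}) holds with the error measure $E$ from (\ref{eq:E1}), namely
\[
  \|\underline x - \underline x^\dagger\|_{1,w} \;\le\; \mathcal{R}_{1,w}(x) - \mathcal{R}_{1,w}(\xdag) + \varphi\bigl(\|A(x-\xdag)\|_Y\bigr) \quad \text{for all } x \in \mathcal{M}_{1,w}.
\]
The construction of this $\varphi$ is the technical heart of \cite{BFH13}: it combines the fast decay of $\underline x^\dagger$ implied by $\xdag \in \M_1$ with quantitative information about the elements $f_k \in Y^*$ with $A^*f_k = u_k$. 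I would expect the main obstacle in making the statement of Theorem~\ref{thm:summary} fully quantitative to be the explicit description of $\varphi$ in terms of the norms $\|f_k\|_{Y^*}$ (or of the $V$-norms $\|u_k\|_V$, via (\ref{eq:K})) and of the tails $\sum_{k>N} w_k |\underline x^\dagger_k|$; however, for the qualitative statement it suffices to know that some such concave index function exists, which \cite{BFH13} provides.

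Finally, to turn the variational inequality into the claimed convergence rate, I would choose the regularization parameter $\alpha = \alpha_*(\delta,y^\delta)$ a posteriori by the sequential discrepancy principle, whose applicability (even for the sublinear $q<1$ case, and certainly for the convex case $q=1$ treated here) is established in \cite{AnzHofMath12,HofMat12}. The standard argument then yields
\[
  \|\underline x_\alpha^\delta - \underline x^\dagger\|_{\1} \;=\; \mathcal{O}(\varphi(\delta)) \qquad \text{as } \delta \to 0,
\]
which is the claim. The detailed choice of $\alpha_*$ and the explicit form of $\varphi$ are deferred to Corollary~\ref{cor:sdp}, as the statement announces.
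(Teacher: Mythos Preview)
Your proposal is correct and mirrors the paper's own argument: the paper does not prove Theorem~\ref{thm:summary} separately but treats it as a summary, combining Proposition~\ref{pro:prop1} (to obtain Assumption~\ref{ass:basic}(b) from the Gelfand-triple hypothesis and (\ref{eq:K})) with the variational-inequality result of \cite{BFH13} (given in extended form as Theorem~\ref{thm:vi}) and the SDP-based rate in Corollaries~\ref{cor:cr} and \ref{cor:sdp}. The only cosmetic point is that the theorem is stated for the unweighted space $\M_1$, so you should specialize your displayed variational inequality to $w_k\equiv 1$.
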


\medskip

The question remains whether a basis in the Hilbert space $X$, consisting of elements in a dense separable subspace $V$, necessarily exists. A positive and constructive answer can be found by use of the Gram-Schmidt process of orthonormalization and we will mention further examples in the following subsections.

\begin{proposition} \label{pro:exist}
 Let  $(V,X,V^*)$ be a Gelfand triple, then there exists an orthonormal basis $\{u_k\}_{k \in \N}$ in $X$ such that $u_k \in V$ for all $k \in \N$.
\end{proposition}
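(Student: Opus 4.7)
The plan is to build the basis directly by Gram--Schmidt orthonormalization applied to a countable dense subset of $V$, exploiting that $V$ is a linear subspace of $X$ and that the inner product and norm of $X$ keep us inside $V$ under finite linear combinations and scalar normalization.

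First I would invoke the separability of $V$ (which is part of Assumption~\ref{ass:basic1}) to pick a countable set $\{v_k\}_{k \in \N} \subset V$ that is dense in $V$ with respect to $\|\cdot\|_V$. Combining this with the continuous embedding (\ref{eq:emb}) I conclude that $\{v_k\}_{k \in \N}$ is also dense in $V$ with respect to $\|\cdot\|_X$, and since $V$ itself is dense in $X$ (i.e.~$\overline{V}^X = X$), the set $\{v_k\}_{k \in \N}$ is dense in $X$ in the $\|\cdot\|_X$-norm.

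Next I would apply the Gram--Schmidt procedure to $\{v_k\}_{k \in \N}$ using the inner product $\langle \cdot,\cdot\rangle_X$: discard any $v_k$ that lies in the $X$-linear span of the preceding vectors, and otherwise subtract its orthogonal projection onto that span and divide by the resulting $\|\cdot\|_X$-norm. The key observation is that $V$ is a linear subspace of $X$, so all finite linear combinations of elements of $V$ remain in $V$ and, since $\|v\|_X > 0$ forces $v/\|v\|_X \in V$, the normalization step also stays inside $V$. Consequently the Gram--Schmidt output $\{u_k\}_{k \in \N}$ is an orthonormal system in $X$ with $u_k \in V$ for every $k$.

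Finally I would verify completeness: by construction the $X$-linear span of $\{u_k\}_{k \in \N}$ equals the $X$-linear span of $\{v_k\}_{k \in \N}$, which is dense in $X$; hence $\{u_k\}_{k \in \N}$ is an orthonormal basis of $X$. Because $X$ is assumed infinite-dimensional (see Section~\ref{sec:preli}), this basis cannot terminate after finitely many steps, otherwise the dense set $\{v_k\}$ would lie in a finite-dimensional, hence closed, proper subspace of $X$. The only mildly delicate point is keeping the normalization inside $V$, but this is immediate from $V$ being a subspace; no estimates on $\|\cdot\|_V$ are required since orthonormality is measured solely with respect to $\langle\cdot,\cdot\rangle_X$.
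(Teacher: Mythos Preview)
Your proof is correct and follows essentially the same approach as the paper: both pick a countable $\|\cdot\|_V$-dense subset of $V$, use the continuous embedding~(\ref{eq:emb}) together with $\overline{V}^X=X$ to conclude $\|\cdot\|_X$-density in $X$, and then run Gram--Schmidt with respect to $\langle\cdot,\cdot\rangle_X$, discarding redundant vectors. Your version is simply more explicit about why the Gram--Schmidt output stays in $V$ and why the resulting system is complete, points the paper leaves implicit.
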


\begin{proof}
Since $V$ is separable there exists a countable and dense subset $\{ v_k\}_{k \in \N}$ in $V$ and as a consequence of (\ref{eq:emb}) we even have $\,\overline{\{ v_k\}_{k \in \N}}^{X}=X$. Hence, we can construct an orthonormal basis $\{u_k\}_{k \in \N} \subset V$ in $X$ by the Gram-Schmidt process of orthonormalization (discarding all zeros that occur in the course of the algorithm).
This completes the proof.
\end{proof}

\subsection{Reconstructions based on the Radon transform}

A prototypical example of a linear ill-posed problem is the interpretation of indirect measurements based on the Radon transform as the Radon transform represents the mathematical forward model of computerized tomography (CT). It maps a function $x(t)$ defined on the unit disc $\Omega \subset \R^2$ onto the family of line integrals
    \[  \RT x (s) = \int_{I(\theta, s)} x(t) dt, \qquad \theta \in S^1, \quad -1 \leq s \leq 1, \]
where $S^1 \subset \R^2$ denotes the unit sphere and $I(\theta, s) = \{ t \in \Omega  : t \cdot \theta = s \}$ the line segment in $\Omega$ perpendicular to $\theta$ with distance $|t|$ from the origin. For a detailed introduction to the Radon transform we refer the interested reader to \cite{n86, nw01}.

For our purpose, it is important that the Radon transform is continuously invertible between certain Sobolev spaces. We consider the real Hilbert spaces $X=L^2(\Omega)$ and $Y = L^2(S^1 \times [-1,1])$ and recall Theorem 2.10 from \cite{nw01}
formulated as Proposition~\ref{pro:rtcont}, which yields a variety of the inequality chain (\ref{eq:isom}).

\begin{proposition} \label{pro:rtcont}
 There exist constants $0<\underline c \le \overline c<\infty$ such that
    \begin{equation} \label{eq:rtcont2}
      \underline c \| x \|_{H^{-1/2} (\Omega)} \leq \| \RT x \|_Y \leq \overline c \| x \|_{H^{-1/2} (\Omega)} \qquad \mbox{for all} \quad x \in H^{-1/2} (\Omega).
    \end{equation}
\end{proposition}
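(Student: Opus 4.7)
The statement is a classical result of Natterer (cited as Theorem 2.10 in \cite{nw01}), so the plan is to reconstruct its proof via the Fourier slice theorem and the resulting equivalence between the Radon norm and a homogeneous Sobolev norm.

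The plan is to start from the \emph{Fourier slice theorem}, which asserts
$$ \widehat{\RT x}(\theta,\sigma) \;=\; \sqrt{2\pi}\,\hat x(\sigma\theta), \qquad \theta \in S^1,\; \sigma \in \R,$$
where the hat on the left denotes the one-dimensional Fourier transform in the radial variable $s$ and the hat on the right is the two-dimensional Fourier transform of $x$ (extended by zero outside $\Omega$). First I would verify this identity on Schwartz functions by writing $\RT x(\theta,s)$ as an integral over the hyperplane $\{t\cdot\theta=s\}$ and then interchanging the $s$-Fourier integration with the integration over this hyperplane. This yields the two-dimensional Fourier transform evaluated on the ray $\sigma\theta$.

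Next I would apply Plancherel in $s$ to rewrite
$$ \|\RT x\|_Y^2 \;=\; \int_{S^1}\int_{-\infty}^{\infty} \bigl|\widehat{\RT x}(\theta,\sigma)\bigr|^2\,d\sigma\,d\theta \;=\; 2\pi\int_{S^1}\int_{-\infty}^{\infty} \bigl|\hat x(\sigma\theta)\bigr|^2\,d\sigma\,d\theta,$$
and then pass to polar coordinates $\xi = \sigma\theta$ on $\R^2$. Since $\sigma$ ranges over all of $\R$, each point is covered twice (by $(\sigma,\theta)$ and $(-\sigma,-\theta)$), producing the identity
$$ \|\RT x\|_Y^2 \;=\; 4\pi\int_{\R^2} \frac{|\hat x(\xi)|^2}{|\xi|}\,d\xi.$$
This is a homogeneous $H^{-1/2}(\R^2)$-type seminorm of $x$.

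Finally, I would argue that for distributions supported in the fixed bounded domain $\Omega$ (the unit disc), the homogeneous weight $|\xi|^{-1}$ and the inhomogeneous weight $(1+|\xi|^2)^{-1/2}$ are equivalent on the relevant subspace. The upper bound $(1+|\xi|^2)^{-1/2} \leq |\xi|^{-1}$ for $|\xi|\geq 1$ is immediate and controls the high-frequency part; for the low-frequency part $|\xi|\leq 1$ one exploits that $x$ has compact support in $\Omega$, so $\hat x$ is real-analytic and bounded in $L^2$ of any ball in terms of a Sobolev norm of any order (Paley--Wiener / Cauchy--Schwarz on the fixed ball gives $\int_{|\xi|\le 1}|\hat x|^2 |\xi|^{-1}d\xi \lesssim \|x\|_{H^{-1/2}(\Omega)}^2$). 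Combining both estimates gives the required two-sided bound with explicit constants $\underline c,\overline c$, and a density argument (Schwartz functions supported in $\Omega$ are dense in $H^{-1/2}(\Omega)$) extends the inequality from smooth to general $x$. The main technical obstacle is the low-frequency comparison between the homogeneous and inhomogeneous norms, which relies essentially on the boundedness of $\Omega$; without this hypothesis the homogeneous norm is not equivalent to the inhomogeneous one and the proposition fails.
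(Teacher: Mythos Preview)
The paper does not give its own proof of this proposition; it simply quotes it as Theorem~2.10 from \cite{nw01}. Your sketch is precisely the standard argument found there: Fourier slice theorem, Plancherel in the radial variable, polar coordinates to obtain the homogeneous $\dot H^{-1/2}$ identity $\|\RT x\|_Y^2 = 4\pi\int_{\R^2}|\hat x(\xi)|^2|\xi|^{-1}\,d\xi$, and then the comparison of the homogeneous and inhomogeneous weights using the compact support in $\Omega$. Two minor remarks: first, since $\RT x(\theta,\cdot)$ is supported in $[-1,1]$ (because $\Omega$ is the unit disc), the passage from $Y=L^2(S^1\times[-1,1])$ to $L^2(S^1\times\R)$ before applying Plancherel is harmless, but it is worth stating explicitly; second, for the low-frequency estimate you do not need anything as strong as Paley--Wiener --- in two dimensions $|\xi|^{-1}$ is locally integrable at the origin, so a uniform bound $\sup_{|\xi|\le 1}|\hat x(\xi)|\le C\|x\|_{H^{-1/2}(\Omega)}$ (obtained by pairing $x$ with a fixed cutoff times $e^{-i\xi\cdot t}$) suffices. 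With these clarifications your reconstruction is correct and matches the cited source.
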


\medskip

If we consider the forward operator $A$ in (\ref{eq:opeq}) as a composition $A := \RT \, \circ \,\mathcal E^* : X \to Y$, where $\mathcal E^*: L^2 (\Omega) \to H^{-1/2} (\Omega)$ is the Sobolev embedding operator,
then we have an estimate (\ref{eq:K}) with $V^*:=H^{-1/2}(\Omega)$ and $K:=\underline c$. Observe that $A$ is compact and injective and that $\big(V, X, V^*)$ with $V:=H_0^{1/2}(\Omega)$ is a Gelfand triple. Consequently, Proposition~\ref{pro:prop1} is applicable and yields the range inclusion
 (\ref{eq:ri}). Hence Item (b) of Assumption~\ref{ass:basic} is fulfilled if the chosen basis $\{u_k\}_{k \in \N}$ in $L^2(\Omega)$ is contained in $H_0^{1/2} (\Omega)$.

Clearly this requirement is met if $\{u_k\}_{k \in \N}$ belongs even to $H_0^{s} (\Omega)$ with $s > 1/2$. In particular,
the eigenfunctions of the Laplace operator with homogeneous Dirichlet boundary conditions form an orthonormal basis of $L^2(\Omega)$ consisting of elements in $H^1_0 (\Omega)$. They are explicitely given \cite{GreNg13} in polar coordinates $(r, \varphi) \in [0,1] \times [0, 2\pi)$ and in terms of the Bessel-functions $J_n (r)$ by
    \begin{equation} \label{eq:Bessel}
      u_{n,k,l} (r, \varphi) = c_{n,k} ~ J_n (\beta_{n,k} r) \cdot \Bigg\{ \begin{array}{cl} \cos (n \varphi) & l=1\\ \sin (n \varphi) & l=2, n \neq 0, \end{array}
    \end{equation}
  with $n$, $k=0,1,2, \dots$, $l=1,2$, where $\beta_{n,k}$ denotes the $k$-th root of $J_n (r)$ and $c_{n,k}$ the normalizing constant $c_{n,k} = \sqrt{2/\pi} ~ \left| J_{n+1} (\beta_{n,k}) \right|^{-1}$.

\subsection{Solving Symm's integral equation}

The following example is taken from \cite[Sect.~3.3]{Kirsch11} (see also \cite[Sect.~6.1.2.1]{Rieder03}). In potential theory the solution $z$ of the Dirichlet problem for the Laplace equation
$$\Delta z=0 \quad \mbox{in} \quad \Omega\,, \qquad \quad z=\widetilde y  \quad \mbox{on} \quad\partial\Omega\,,$$
where $\Omega \subset \R^2$ is a bounded and simply connected domain with analytic boundary $\partial \Omega$ and $\widetilde y \in C(\partial \Omega)$ is a given function,
can be found by the simple layer potential
$$z(\kappa)=\frac{1}{\pi} \int \limits_{\partial \Omega} \widetilde x(\kappa) \log |\kappa-\eta|\, d s(\eta), \qquad \kappa \in \Omega\,.$$
In this context, the required potential density function $\tilde x \in C(\partial \Omega)$ satisfies \emph{Symm's integral equation}
\begin{equation} \label{eq:symm}
-\frac{1}{\pi} \int \limits_{\partial \Omega} \widetilde x(\kappa) \log |\kappa-\eta|\, d s(\eta)=\widetilde y(\eta), \qquad \eta \in \partial\Omega\,.
\end{equation}
Assuming that the boundary $\partial \Omega$ can be parametrized by a smooth curve $\gamma: [0,2\pi] \to \R^2$ with non-vanishing derivative, the analog of equation (\ref{eq:symm}) for the transformed density
$x(t):=\tilde x(\gamma(t))\,|\dot{\gamma}(t)|$  attains the form
\begin{equation} \label{eq:symmpar}
[Ax](t):= -\frac{1}{\pi} \int \limits_0^{2\pi} x(t)\, \log |\gamma(t)-\gamma(\tau)|\, d \tau= y(t), \qquad t \in [0,2\pi]\,.
\end{equation}
We denote by $L^2_{per}(0,2\pi)$ the complex Hilbert space of square integrable periodic functions with period length $2\pi$ and
by
 $$H^\nu_{per}(0,2\pi):= \Bigg\{ \sum \limits_{k \in \Z}c_k e^{ikt}:\,\sum \limits_{k \in \Z}|c_k|^2(1+k^2)^{\nu}<\infty \Bigg\}$$
the corresponding complex Sobolev space of level $\nu \in \R$.  Then with $X=Y:=L^2_{per}(0,2\pi)$ the linear operator $A: X \to Y$  defined by (\ref{eq:symmpar}) is a compact map with nonclosed range $\range(A) \subset Y$, but we have a continuous linear isomorphism (\ref{eq:isom}) with $V^*:=H^{-1}_{per}(0,2\pi)$. To apply
Proposition~\ref{pro:prop1} it is sufficient to have a countable set in $H^{1}_{per}(0,2\pi)$ which is an orthonormal basis in  $L^2_{per}(0,2\pi)$. Evidently, the sequence $\{e^{ikt}/\sqrt{2\pi}\}_{k \in \Z}$ of analytic functions serves as such a system.

\subsection{Solving Abel integral equations of first kind}

A fairly general approach to the premises of Proposition \ref{pro:prop1} is delivered by \emph{Hilbert scales} $\{X_\nu\}_{\nu \in \R}$
(cf., e.g.,~\cite[Chapt.~5]{Bau87}).
The Hilbert scales are generated on the basis of a positive definite self-adjoint unbounded operator $B:\domain{B}\subset X \to X$ densely defined in $X$ with discrete spectrum completely characterized by eigenvalues $0<\mu_1 \le \mu_2 \le ... \le \mu_k \le \mu_{k+1} \le ... \to \infty$ as $k \to \infty$ and a orthonormal basis $\{w_k\}_{k \in \N}$ of corresponding
eigenelements satisfying the equation $Bw_k=\mu_kw_k,\;k \in \N$. Then for every $\nu \in \R$ we can define an inner product $\langle u,v \rangle_{X_\nu}:=\mu_k^{2\nu}\langle u,w_k\rangle_X  \langle v,w_k \rangle_X$ and an
associated norm $\|u\|_{X_\nu}:=\sqrt{\langle u,u\rangle_{X_\nu}}$ and we have $x \in X_\nu$ for some $x \in X$ if and only if $\|x\|_{X_\nu}<\infty$. Taking into account that $X=X_0$ we have for every $\nu>0$ and
 $V:=X_\nu,\;V^*=X_{-\nu})$ a Gelfand triple $(V,X,V^*)$ as introduced above.

We present such an example with the particularity that the Hilbert scale is related to an associated Sobolev scale. Precisely, our focus is on a class of operator equations (\ref{eq:opeq}) formed by Abel-type integral equations of the first kind, and we refer to \cite{GorVes91} for more details and practical applications. Let $\nu$ be a fixed parameter chosen from the interval $0<\nu \le 1$. Then we consider in the real Hilbert space $X=Y=L^2(0,1)$ the weakly singular  integral operator $A$ defined as
\begin{equation} \label{eq:abel}
[Ax](s) = \frac{1}{\Gamma(\nu)} \int \limits _0^s (s-t)^{\nu-1}\, K(s,t)\, x(t)\, dt,\qquad 0 \le s \le 1,
\end{equation}
where the kernel function $K(s,t)$ is assumed to be continuous for $0 \le t \le s \le 1$, with $K(t,t)=1,\;0 \le t \le 1$ and a decreasing function $\kappa \in L^2(0,1)$ such that $\left|\frac{\partial K}{\partial t}(s,t)\right| \le \kappa(t),\;0 \le t \le s \le 1$.

Now the differential operator $B$ for the present example is mapping in $X=L^2(0,1)$ and defined as $B^2w=w^{\prime\prime}$ with the boundary conditions $w^\prime(0)=w(1)=0$. It possesses the eigenvalues
$\mu_k=(k-\frac{1}{2})\pi$ and the corresponding eigenfunctions $w_k(t)=\sqrt{2} \cos (\mu_k t),\;0 \le t \le 1$. Moreover, the Hilbert scale elements for $0<\nu \le 1$ are closely connected with Sobolev spaces, namely
$$X_\nu=\left\{\begin{array}{clc} H^\nu(0,1)& \mbox{for} & 0<\nu<1/2\\\{w \in H^{1/2}(0,1):\,\int \limits_0^1 (1-t)^{-1}|w(t)|^2 dt<\infty\}& \mbox{for} & \nu=1/2\\\{w \in H^\nu(0,1):\,w(1)=0\} & \mbox{for} & 1/2<\nu \le 1. \end{array}\right.$$
Following the studies in \cite{GorYam99} it follows that there are constants $0<\underline c \le \overline c < \infty$ such that in $X=L^2(0,1)$ for the Abel-type operator $A$ from (\ref{eq:abel}) an inequality chain of the form (\ref{eq:isom}) is valid as
$$\underline c\,\|x\|_{X_{-\nu}} \le \|Ax\|_X  \le \overline c \,\|x\|_{X_{-\nu}}$$
for all $x \in X$. Hence for basis functions $u_k \in X_\nu,\;k \in \N$, the Proposition~\ref{pro:prop1} applies and yields the range inclusion (\ref{eq:ri}) as required.
Because of $X_1 \subset X_\nu$ for all $0<\nu \le 1$ the eigenbasis  $\big \{\sqrt{2} \cos((k-\frac{1}{2})\pi t),\;0 \le t \le 1 \big\}_{k \in \N} \subset X_1$ of $B$ acts as such an orthonormal basis $\{u_k\}_{k \in \N}$ in $L^2(0,1)$.

\subsection{Finding higher derivatives from noisy data}

Considering the real Hilbert space $L^2(0,1)$ as $X$ and $Y$, the constant kernel $K(s,t)=1$ and $\nu=n$ with $n=2,3,...$, the operator $A$ from (\ref{eq:abel}) leads to the problem of finding the $n$-th derivative of a function $y$ when the equation (\ref{eq:opeq}) with forward operator
 \begin{equation} \label{eq:nthder}
[Ax](s) = \frac{1}{\Gamma(n)} \int \limits _0^s (s-t)^{n-1}\, x(t)\, dt,\qquad 0 \le s \le 1,
\end{equation}
is to be solved.  Since additional boundary conditions occur for integers $n$ greater than~$1$, the characterization
of $X_n$ and $X_{-n}$ gets more complicated than for the parameter interval $0<\nu \le 1$ in Subsection~3.3 and the Gelfand triples are not so simple. However, sufficient conditions for Item (b) of Assumption~\ref{ass:basic} can be derived directly without the detour via Proposition~\ref{pro:prop1} when the range $\range(A^*)$ can be verified explicitly. It is well known that such \emph{fractional integral operators} have the adjoint
$$[A^*z](t) = \frac{1}{\Gamma(n)} \int \limits_t^1  (s-t)^{n-1}\, z(s)\, ds,\qquad 0 \le t \le 1.$$
This explicit structure of $A^*$ allows for a characterization of its range. So we have for example in the case $n=2$
$$\range(A^*)=\{w \in H^2(0,1):\, w(1)=0,\; w^\prime(0)=0\},$$
which determines the differentiability requirements and boundary conditions to be prescribed for all basis functions $u_k, \; k\in \N$. Note that again
  $$\{u_k\}_{k \in \N}= \Bigg \{\sqrt{2} \cos \Big( \bigg(k-\frac{1}{2}\bigg)\pi t \Big),\;0 \le t \le 1 \Big\}_{k \in \N}$$
serves an appropriate basis in this case.

\section{Non-convex sparsity-promoting regularization}\label{sec:nonconvex}
\setcounter{equation}{0}
\setcounter{theorem}{0}

  Although for $0<q<1$ the functional $\Rqw (x)$ is not convex, which leads to additional difficulties in the numerical computation of regularized solutions $\xad$ (see~\cite{RamZar12} for a minimization approach), Tikhonov regularization with such sublinear penalties is of practical importance, because the regularized solutions $\xad$ are sparse and such penalties emphasize  (cf.~\cite{Zar09,Xuetal10}) the sparse character of the solutions even more than convex regularization with $q = 1$. In various biomedical applications, for example, the principle of Occam's razor suggests that solutions composed of few active elements should be preferred to explain certain observations. 
  We also refer to \cite{KMS12} for a sensitivity discussion of the case where $q$ is slightly greater than~$1$, but the regularized solution are not necessarily sparse. Nevertheless, it may happen that $\xdag \in \M_0$ is expected, but this property holds true only approximately.
  Therefore, it seems reasonable to extend convergence rates results from \cite{BFH13} obtained for $\xdag \in \M_{1,w}$, to the non-convex penalty case under the assumption $\xdag \in \Mqw,\;0<q<1$. We note that convergence rates for $\xdag \in \M_0$ and $0<q<1$ were presented in \cite{BreLor09,Grasm09,Gras10,Grasm10,Zar09} and also that the limit case of using $q=0$ with $\,\Rzw (x)= \sum \limits _{k=1}^\infty \sgn(|\langle x,u_k \rangle_X |)\,$  makes no sense in a pure form, because this penalty is not stabilizing and hence minimizers of (\ref{eq:Tik}) might not exist (cf.~\cite[\S~5.2]{Lorenz08}).
  However, these difficulties can be reduced if $\Rzw (x)$ is used in a linear combination with $\Rtw (x)$ as suggested in \cite{Wangetal13}. We also note that studies for penalties of type $\,\Reg(x)= \sum \limits _{k=1}^\infty \psi(|\langle x,u_k \rangle_X |)\,$   taking into account a wider class of nonnegative functions $\psi$ were presented in \cite{BreLor09, Gras10}.

  We recall the definition of the sequential discrepancy principle from \cite{AnzHofMath12, HofMat12}. For prescribed $0 < \theta < 1$ and $\alpha_0>0$, let
    \[ \Delta_\theta := \{ \alpha_j ~: ~ \alpha_j = \theta^j \alpha_0, \quad j \in \Z \}. \]
  Given any $\delta >0$  and data $\yd$, we fix some selection of minimizers~$\xad$ of the Tikhonov functional $T_\alpha^\delta$ for $\alpha \in \Delta_{\theta}$.

\begin{definition}[Sequential discrepancy principle] \label{dfn:sdp}
  We say that an element~$\alpha \in \Delta_\theta$ is
  chosen according to the {\it sequential discrepancy principle (SDP)}, if
  \begin{equation}\label{eq:sdp}
    \| A \xad - \yd \| \leq \tau \delta < \| A {x_{\alpha/\theta}^{\delta}} - \yd \|.
  \end{equation}
\end{definition}

  The main advantage of this sequential definition over stronger versions of the discrepancy principle, in particular
    \begin{equation}\label{eq:strongdp}
      \tau_1 \delta \leq \| A \xad - \yd \| \leq \tau_2 \delta, \qquad \mbox{with } 1 \leq \tau_1 \leq \tau_2
    \end{equation}
  (cf. \cite{AR1, AR2, Bon09, BFH13, Flemmingbuch12}), becomes evident when the forward operator under consideration is non-linear and (\ref{eq:strongdp}) may not have any solutions due to duality gaps (cf. \cite{SchKHK12}). But even for linear $A$, we feel that Definition \ref{dfn:sdp} captures the computational reality of finding such regularization parameters $\alpha$ more accurately and we thus prefer to work with the sequential formulation here. However, it is certainly worth noting, that the following results remain true if $\alpha$ is chosen according to (\ref{eq:strongdp}) instead.

\medskip

  To guarantee regularizing properties of Tikhonov regularization with parameter $\alpha$ chosen according to the discrepancy principle, it is crucial that the so-called {\em exact penalization veto} is satisfied and the data are {\em compatible} (compare Assumptions 2 and 3 in \cite{AnzHofMath12}, respectively).
  In the context of convex regularization with $\Rqw (x)$ from (\ref{eq:q}), $1 \le q \le 2$, and bounded, linear and injective forward operator $A$ these assumptions are necessarily satisfied if $p>1$ in (\ref{eq:Tik}) -- which excludes exact penalization -- and if the solution satisfies $\xdag \neq 0$.
  The same remains true for the non-convex penalties $\Rqw (x)$ from (\ref{eq:q}) with $0<q<1$ as they are positively homogeneous of degree $q$. The proof, which is based on arguments from \cite{AnzHofMath12, AR1}, is included here for the convenience of the reader.

  \begin{theorem} \label{thm:sdp}
    In the Tikhonov functional (\ref{eq:Tik}), let $p>1$ and $0<q<1$ such that Assumption \ref{ass:basic} (a) holds for $\xdag \neq 0$. Then there is some $\bar \delta>0$ such that regularization parameters $\alpha = \alpha(\delta, \yd)$ chosen according to the SDP exist for all $0<\delta \le \bar \delta$. These parameters satisfy the limit conditions
    \begin{equation} \label{eq:limits}
     \alpha (\delta, \yd) \to 0 \qquad \mbox{and} \qquad \frac{\delta^p}{\alpha (\delta, \yd)} \to 0 \qquad \mbox{as} \quad \delta \to 0.
    \end{equation}
    and the associated regularized solutions $x_{\alpha (\delta, \yd)}^\delta$ converge to $\xdag$ in norm and in the sense $\lim \limits_{\delta \to 0} \Rqw (x_{\alpha (\delta, \yd)}^\delta - \xdag) = 0$ as $\delta \to 0$.
  \end{theorem}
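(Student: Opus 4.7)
The argument splits naturally into three parts, which I treat in sequence: (i) existence of some $\alpha=\alpha(\delta,\yd)\in\Delta_\theta$ satisfying (\ref{eq:sdp}) for $\delta$ sufficiently small; (ii) the two limit conditions in (\ref{eq:limits}); and (iii) the announced convergence of $\xad$ to $\xdag$ in norm and in $\Rqw$-value of the difference.

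For (i), I analyse the discrepancy $d(\alpha):=\|A\xad-\yd\|_Y$ at the two ends of the grid. Testing the minimality of $\xad$ against $\xdag$ yields $d(\alpha)^p\le \delta^p+p\alpha\,\Rqw(\xdag)$, so $\limsup_{\alpha\to 0^+}d(\alpha)\le \delta$; testing against $0\in X$ yields $\alpha\,\Rqw(\xad)\le \|\yd\|_Y^p/p$, and the continuous embedding $\ell^q_w(\N)\hookrightarrow \2$ (valid because $0<q\le 2$ and $w_k\ge w_0>0$) then forces $\xad\to 0$ in $X$ as $\alpha\to\infty$, whence $d(\alpha)\to\|\yd\|_Y$. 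Since $\xdag\ne 0$ and $A$ is injective, choosing $\bar\delta>0$ small enough makes $\|\yd\|_Y>\tau\delta$ for $\delta\le\bar\delta$, and the desired grid point is then the largest $\alpha=\theta^j\alpha_0$ with $d(\alpha)\le\tau\delta$; monotonicity on the grid gives $d(\alpha/\theta)>\tau\delta$ automatically.

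For (ii), fix $\delta_n\to 0^+$ and set $\alpha_n:=\alpha(\delta_n,\yd_n)$. To prove $\alpha_n\to 0$ I argue by contradiction: if a subsequence has $\alpha_n\to\alpha_\infty\in(0,\infty]$, the case $\alpha_\infty=\infty$ forces $A\xad_n\to 0$ as in (i), contradicting $A\xad_n\to y\ne 0$ from the SDP upper bound, while the case $\alpha_\infty<\infty$ leads via the standard Tikhonov stability argument (weak coercivity of $\Rqw$ plus a $\liminf$ in $T_{\alpha_n}^{\delta_n}$) to a weak limit $x^\ast$ that must both equal $\xdag$ (by injectivity of $A$ and strong convergence $A\xad_n\to y$) and minimize $T_{\alpha_\infty}^0$. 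The latter conclusion clashes with the \emph{exact penalization veto}: since $p>1$ and $\Rqw$ is positively $q$-homogeneous with $0<q<1$, the expansion
\begin{equation*}
  T_{\alpha_\infty}^0\bigl((1-t)\xdag\bigr)=\tfrac{1}{p}\,t^p\|A\xdag\|_Y^p+\alpha_\infty(1-t)^q\,\Rqw(\xdag)
\end{equation*}
has strictly negative right derivative $-\alpha_\infty q\,\Rqw(\xdag)$ at $t=0$, so $\xdag$ is not a minimizer of $T_{\alpha_\infty}^0$. For $\delta_n^p/\alpha_n\to 0$ I combine the SDP lower bound $\|Ax_{\alpha_n/\theta}^{\delta_n}-\yd_n\|_Y^p>\tau^p\delta_n^p$ with minimality of $x_{\alpha_n/\theta}^{\delta_n}$ tested against $\xdag$ to obtain
\begin{equation*}
  \Rqw(\xdag)-\Rqw\bigl(x_{\alpha_n/\theta}^{\delta_n}\bigr)\ge \frac{\theta(\tau^p-1)\,\delta_n^p}{p\,\alpha_n};
\end{equation*}
on the other hand $\alpha_n/\theta\to 0$ together with the same minimality give $Ax_{\alpha_n/\theta}^{\delta_n}\to y$ in $Y$ and a uniform bound on $\Rqw(x_{\alpha_n/\theta}^{\delta_n})$, hence by weak coercivity and injectivity $x_{\alpha_n/\theta}^{\delta_n}\rightharpoonup\xdag$ along a further subsequence, and weak lower semicontinuity of $\Rqw$ forces $\liminf_n[\Rqw(\xdag)-\Rqw(x_{\alpha_n/\theta}^{\delta_n})]\le 0$. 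Combined with the displayed inequality, this rules out $\limsup_n\delta_n^p/\alpha_n>0$.

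Finally, part (iii) follows from (ii) through the three ingredients of Subsection~\ref{sec:preli} -- weak compactness of $\Rqw$-sublevel sets produces $\xad_n\rightharpoonup\xdag$; the estimate $\Rqw(\xad_n)\le \delta_n^p/(p\alpha_n)+\Rqw(\xdag)\to\Rqw(\xdag)$ combined with weak lsc gives $\Rqw(\xad_n)\to\Rqw(\xdag)$; and the Kadec--Klee property in the $\Mqw$ scale then yields both the strong $X$-convergence and $\Rqw(\xad_n-\xdag)\to 0$. The main technical obstacle is the step $\delta^p/\alpha\to 0$: a direct rearrangement of (\ref{eq:sdp}) only gives the uniform bound $\delta^p/\alpha\le p\,\Rqw(\xdag)/[\theta(\tau^p-1)]$, and extracting actual decay to zero requires the interplay between the strict SDP lower bound and weak lower semicontinuity of the non-convex penalty executed above.
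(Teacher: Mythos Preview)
Your argument is correct and essentially self-contained, whereas the paper's proof outsources almost everything to \cite{AnzHofMath12}: it verifies only that $\xdag$ is never a minimizer of $T^0_\alpha$ (the \emph{exact penalization veto}), using the same test function $(1-t)\xdag$ and positive $q$-homogeneity that you use, and then invokes Proposition~5, Lemma~2 and Theorem~1 of \cite{AnzHofMath12} for the existence of an SDP parameter, the two limit conditions, weak convergence, and $\Rqw(\xad)\to\Rqw(\xdag)$; the Kadec--Klee step is identical to yours. Thus the shared technical core is the penalization veto, but where the paper cites abstract results you supply concrete arguments: in~(i) you analyse the discrepancy at both ends of the grid, and in~(ii) you derive the limits directly via contradiction and the inequality $\Rqw(\xdag)-\Rqw(x^{\delta_n}_{\alpha_n/\theta})>\theta(\tau^p-1)\delta_n^p/(p\alpha_n)$. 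Your route has the merit of making the argument independent of the general machinery in \cite{AnzHofMath12}, at the price of some length.

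One minor remark on part~(i): the phrase ``monotonicity on the grid'' is unnecessary and potentially misleading, since for the non-convex penalties $\Rqw$ with $0<q<1$ the discrepancy $\alpha\mapsto d(\alpha)$ need not be monotone. Your conclusion is nonetheless correct: having established that $d(\alpha)\le\tau\delta$ for all sufficiently small grid points and $d(\alpha)>\tau\delta$ for all sufficiently large ones, the \emph{largest} grid point $\alpha$ with $d(\alpha)\le\tau\delta$ exists, and by maximality (not monotonicity) the next point $\alpha/\theta$ automatically satisfies $d(\alpha/\theta)>\tau\delta$.
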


  \begin{proof}
   We use some notation from \cite{AnzHofMath12}: The unique minimizing element of the penalty term $\Rqw (x)$ is $x_{\min} =0$, so that $X_{\min} = Y_{\min} = \{ 0 \}$, $\Reg_{\min} = 0$. Moreover, $\mathcal{L} := \{ \xdag \}$ yields $\mathcal{L} \cap X_{\min} = \emptyset$. Thus, according to Proposition 5 in \cite{AnzHofMath12}, both the data compatibility assumption and the exact penalization veto are satisfied if the solution $\xdag$ does not minimize $T^0_\alpha$ for any $\alpha > 0$, where by $T^0_\alpha$ we denote the Tikhonov functional (\ref{eq:Tik}) with exact data $y$. To prove the latter assertion, we proceed by contradiction:

    Assume that $\xdag \neq 0$ minimizes $T^0_\alpha$ for some $\alpha > 0$, then for all $0 < t < 1$ we have
      \begin{equation*}
	\alpha \Rqw (\xdag) = T^0_\alpha (\xdag) \leq T^0_\alpha ((1-t) \xdag) = t^p \| y \|^p + \alpha (1-t)^q \Rqw(\xdag),
      \end{equation*}
    which yields
      \begin{equation*}
	\alpha \Rqw (\xdag) \leq \frac{t^p}{1-(1-t)^q}  \| y \|^p \to 0 \qquad \mbox{as }\quad  t \to +0.
      \end{equation*}
    It follows that $\Rqw (\xdag)=0$ and hence $\xdag = 0$ which contradicts our assumption.

    Consequently, Lemma 2 and Theorem 1 in \cite{AnzHofMath12} are applicable and yield the existence of a regularization parameter $\alpha = \alpha (\delta, \yd) \in \Delta_\theta$ chosen according to (\ref{eq:sdp}), the asymptotic relations (\ref{eq:limits}) and, since the solution $\xdag$ is unique, also weak convergence $\xad \rightharpoonup \xdag$ as well as $\Rqw (\xad) \to \Rqw (\xdag)$ as $\delta \to 0$.

    Finally, the functionals $\Rqw (x)$ satisfy the Kadec-Klee (or Radon-Riesz) property which implies $\Rqw (\xad - \xdag) \to 0$ (see \cite[Prop.~3.6]{Gras10})
    and thus also $\| \xad - \xdag \|_X \to 0$ as $\delta \to 0$.
  \end{proof}

  To go beyond well-posedness and convergence of the regularized solutions, we use the smoothness of basis elements $u_k$, expressed in terms of the range inclusion $u_k \in \range (A^*)$. Indeed, under our specific Assumption~\ref{ass:basic} we have an adapted variational inequality (\ref{eq:vi}) for the error measure $E(x,\xdag)=\Rqw (x-\xdag)$ which immediately leads to associated convergence rates.

  \begin{theorem} \label{thm:vi}
  Under Assumption~\ref{ass:basic} and taking any $0<q\le 1$ a variational inequality
    \begin{equation} \label{eq:viq}
      \Rqw (x-\xdag) \leq \Rqw (x) - \Rqw (\xdag) + \varphi_{q,w} \left(\| A (x- \xdag) \| \right)
    \end{equation}
  holds for all $x \in \Mqw$ with the concave index function
    \begin{equation} \label{eq:phiq}
      \varphi_{q,w} (t) := 2 \inf_{n \in \N} \left( \sum_{k = n+1}^\infty w_k \vert \langle \xdag,u_k\rangle_X \vert^q + t^q \sum_{k \in S_n (\xdag)} w_k \|f_k\|_{Y^*}^q \right),
    \end{equation}
   where
    \[ S_n (\xdag) = \supp (\xdag) \cap \{1, \ldots, n \}. \]
  \end{theorem}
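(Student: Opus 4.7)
The plan is to work coordinatewise in the fixed orthonormal basis $\{u_k\}_{k\in\N}$, exploiting the two-sided subadditivity of $t \mapsto |t|^q$ for $0<q\le 1$ together with the range condition in Assumption~\ref{ass:basic}(b). Writing $\underline x_k = \langle x,u_k\rangle_X$ and $\underline x^\dagger_k = \langle \xdag,u_k\rangle_X$, the quantity I want to bound is
\begin{equation*}
\Rqw(x-\xdag)-\Rqw(x)+\Rqw(\xdag) \;=\; \sum_{k=1}^\infty w_k\bigl(|\underline x_k-\underline x^\dagger_k|^q-|\underline x_k|^q+|\underline x^\dagger_k|^q\bigr).
\end{equation*}
The first observation is that for any $k\notin\supp(\xdag)$ we have $\underline x^\dagger_k=0$, so the corresponding summand vanishes. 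Hence the sum reduces to indices $k\in\supp(\xdag)$.

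Next I would fix an arbitrary $n\in\N$ and split $\supp(\xdag)$ into the head $S_n(\xdag)=\supp(\xdag)\cap\{1,\dots,n\}$ and the tail $\supp(\xdag)\setminus S_n(\xdag)$. For $k\in S_n(\xdag)$ the range condition gives $f_k\in Y^*$ with $u_k=A^*f_k$, and the duality identity yields $\underline x_k-\underline x^\dagger_k=\langle A(x-\xdag),f_k\rangle_{Y^*\times Y}$, hence
\begin{equation*}
|\underline x_k-\underline x^\dagger_k|^q \;\le\; \|A(x-\xdag)\|_Y^{\,q}\,\|f_k\|_{Y^*}^{\,q}.
\end{equation*}
Combining this with the reverse $q$-quasi-triangle inequality $|\underline x^\dagger_k|^q-|\underline x_k|^q\le|\underline x_k-\underline x^\dagger_k|^q$ (which follows by applying $|a+b|^q\le|a|^q+|b|^q$ to $\underline x^\dagger_k=\underline x_k+(\underline x^\dagger_k-\underline x_k)$) gives the head bound
\begin{equation*}
|\underline x_k-\underline x^\dagger_k|^q-|\underline x_k|^q+|\underline x^\dagger_k|^q \;\le\; 2|\underline x_k-\underline x^\dagger_k|^q \;\le\; 2\,\|A(x-\xdag)\|_Y^{\,q}\,\|f_k\|_{Y^*}^{\,q}.
\end{equation*}
For $k\in\supp(\xdag)\setminus S_n(\xdag)$, i.e.\ $k>n$, the forward $q$-quasi-triangle inequality gives $|\underline x_k-\underline x^\dagger_k|^q\le|\underline x_k|^q+|\underline x^\dagger_k|^q$, so that the tail bound becomes $2|\underline x^\dagger_k|^q$.

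Summing the two bounds, weighting by $w_k$, and using that $\underline x^\dagger_k=0$ off $\supp(\xdag)$ to extend the tail sum to all $k>n$, I arrive at
\begin{equation*}
\Rqw(x-\xdag)-\Rqw(x)+\Rqw(\xdag) \;\le\; 2\!\!\sum_{k\in S_n(\xdag)}\!\!w_k\|f_k\|_{Y^*}^{\,q}\|A(x-\xdag)\|_Y^{\,q}+2\!\!\sum_{k=n+1}^\infty\!\!w_k|\underline x^\dagger_k|^q,
\end{equation*}
and since $n$ was arbitrary, taking the infimum yields (\ref{eq:viq}) with $\varphi_{q,w}$ as in (\ref{eq:phiq}). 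It then remains to verify that $\varphi_{q,w}$ is a concave index function: each function $t\mapsto a_n+b_n t^q$ with $a_n,b_n\ge 0$ is concave and non-decreasing for $0<q\le 1$, an infimum of such functions inherits both properties, and $\varphi_{q,w}(t)\to 0$ as $t\to +0$ because $\xdag\in\Mqw$ forces the tail sums $\sum_{k>n}w_k|\underline x^\dagger_k|^q$ to tend to zero as $n\to\infty$.

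The one delicate point, and the only place where some care is needed, is the head estimate: one has to apply the $q$-quasi-triangle inequality in the \emph{reverse} direction to turn $|\underline x^\dagger_k|^q-|\underline x_k|^q$ into a controllable expression in $|\underline x_k-\underline x^\dagger_k|^q$, so that the range condition can be used with a clean factor $2$. Everything else is bookkeeping and the standard concavity argument for the index function.
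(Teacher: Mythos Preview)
Your proof is correct and follows essentially the same route as the paper's: the paper phrases the head/tail split via projections $P_n$ onto $S_n(\xdag)$ and the analysis operator $L^*$, but the underlying coordinatewise estimates (the $q$-quasi-triangle inequality in both directions, and the use of $u_k=A^*f_k$ on the head) are identical to yours. If anything, your write-up is slightly more explicit than the paper's sketch, which defers some details to \cite{BFH13}.
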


\begin{proof}
  We sketch the proof, which is an extension of arguments from Lemma 5.1 and Theorem 5.2 in \cite{BFH13}. Let us denote by $L^* : X \to \ell^2 (\N)$ the operator mapping elements $x \in X$ to their coefficients $\{ \langle x, u_k \rangle \}_{k \in \N}$ (compare Section \ref{sec:frame}) and define projections $P_n : \ell^2 (\N) \to \ell^2 (\N)$ according to
    \[ P_n (c)_k = \bigg\{\begin{array}{cl} c_k & \mbox{if}\; k \in S_n (\xdag)   \\
		0 & \mbox{otherwise.} \end{array} \]
  Using the identity
    \[ \Rqw (x) = \| P_n L^* x \|_{q,w}^q + \left\| (I-P_n) L^* x \right\|_{q,w}^q, \qquad x \in \Mqw, \]
  and the $q-$triangle inequality for $0<q \leq 1$,
    \[ \Rqw (x + z) \leq \Rqw (x) + \Rqw (z), \qquad x, z \in \Mqw, \]
  we derive
    \[  \Rqw (x - \xdag) - \Rqw (x) + \Rqw (\xdag) \leq 2 \left( \left\| (I - P_n) L^* \xdag \right\|_{q,w}^q + \left\|  P_n L^*( x- \xdag) \right\|_{q,w}^q \right). \]
  Concerning the latter term, recall that for $k \in S_n (\xdag) \subset \supp (\xdag)$ there exists $f_k \in Y^*$ such that $u_k = A^* f_k$ and thus
    \[  \left\|  P_n L^*( x- \xdag) \right\|_{q,w}^q = \sum_{k \in S_n (\xdag)} w_k |\langle x-\xdag, A^* f_k \rangle_X|^q \leq \| A(x-\xdag) \|_Y^q \sum_{k \in S_n (\xdag)} w_k \|f_k\|_{Y^*}^q,  \]
  whence the result (\ref{eq:viq}) follows.

  To see that the continuous function $\varphi_{q,w} (t)$ is indeed a concave index function, observe that $\varphi_{q,w} (t) < \infty$ for all $t \in [0, \infty)$ and that $\varphi_{q,w} (0) = 0$. However, it is well-known that an infimum of continuous and concave functions $\varphi_{q,w}$ is itself concave. For $0 \leq t_1 < t_2 < \infty$ it holds for all $n \in \N$ that
    \[  \sum_{k = n+1}^\infty w_k \vert  \langle \xdag,u_k\rangle_X \vert^q + t_1^q \sum_{k \in S_n (\xdag)} w_k \|f_k\|_{Y^*}^q \leq \sum_{k = n+1}^\infty w_k \vert  \langle \xdag,u_k\rangle_X \vert^q + t_2^q \sum_{k \in S_n (\xdag)} w_k \|f_k\|_{Y^*}^q, \]
  hence $\varphi_{q,w}$ is non-decreasing.
\end{proof}

  For regularization parameter chosen according to the SDP, it has been shown in \cite[Theorem 2]{HofMat12} that the presence of a variational inequality (\ref{eq:viq}) yields a corresponding convergence rate as the noise level $\delta$ tends to zero.

\begin{corollary} \label{cor:cr}
  Under the assumptions of Theorems~\ref{thm:sdp} and \ref{thm:vi} and using SDP as regularization parameter choice $\alpha_*=\alpha_*(\delta,y^\delta)$ we obtain a convergence rate
    \begin{equation} \label{eq:rate}
      \Rqw (x^\delta_{\alpha_*}- \xdag) = \mathcal{O} (\varphi_{q,w} (\delta)) \qquad \mbox{as} \qquad \delta \to 0
    \end{equation}
 with the concave index function $\varphi_{q,w}$ from (\ref{eq:phiq}).
  Independent of the choice of $0<q \le 1$ in the penalty we always have
      \[ \| x^\delta_{\alpha_*}- \xdag \|_X \leq \Row (x^\delta_{\alpha_*}- \xdag) 
	  \le \Big( \frac{1}{w_0} \Rqw (x^\delta_{\alpha_*}- \xdag) \Big)^{1/q} =  \mathcal{O} (\varphi^{1/q}_{q,w} (\delta)) \]
  as $\delta \to 0$.
   If $\xdag$ is sparse, then $\xdag \in \Mqw$ holds for all $q \in [0,1]$ and we get a linear rate expressed by the estimate
   \[ \varphi_{q,w}^{1/q} (\delta) \le C_{q,w} \,\delta, \]
  but the constants
    \[ C_{q,w} = \left( \sum_{k \in \supp (\xdag)} w_k \|f_k\|_{Y^*}^q \right)^{1/q} \]
  tend to infinity as $q \to 0$ whenever $~\supp (\xdag)$ contains more than one element.
\end{corollary}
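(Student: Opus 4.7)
The plan is to assemble the corollary from three already-prepared ingredients: the variational inequality (\ref{eq:viq}) from Theorem~\ref{thm:vi}, the well-posedness of the sequential discrepancy principle from Theorem~\ref{thm:sdp}, and an external rate-transfer theorem (\cite[Thm.~2]{HofMat12}). For the principal rate (\ref{eq:rate}), I would simply invoke that external theorem: given a variational inequality of the form $E(x,\xdag)\le \Rqw(x)-\Rqw(\xdag)+\varphi(\|A(x-\xdag)\|)$ with concave index function $\varphi$, and given that the parameter $\alpha_*$ is chosen by SDP (which is legitimate by Theorem~\ref{thm:sdp}), one obtains the rate $E(x_{\alpha_*}^\delta,\xdag)=\mathcal{O}(\varphi(\delta))$. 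Specializing to $E(x,\xdag)=\Rqw(x-\xdag)$ and $\varphi=\varphi_{q,w}$ yields (\ref{eq:rate}) directly. No additional computation is required here.

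For the norm estimate, I would chain two elementary bounds on arbitrary $x\in X$. First, by Parseval and the embedding $\ell^1\hookrightarrow\ell^2$, together with the weight lower bound, one has $\|x\|_X=\|\underline x\|_2\le \|\underline x\|_1\le \Row(x)$. Second, the bound $\Row(x)\le (w_0^{-1}\Rqw(x))^{1/q}$ is already recorded in Section~\ref{sec:preli}. Applying this chain to the error $x_{\alpha_*}^\delta-\xdag$ and combining with (\ref{eq:rate}) gives the asserted $\mathcal{O}(\varphi_{q,w}^{1/q}(\delta))$ convergence in the $X$-norm.

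For the linear rate in the sparse case, set $n_0:=\max\supp(\xdag)$, which is finite because $\xdag\in\M_0$. Then for every $n\ge n_0$ one has $S_n(\xdag)=\supp(\xdag)$, so the first sum in (\ref{eq:phiq}) vanishes while the second is independent of $n$. Taking the infimum at such $n$ gives
\[
\varphi_{q,w}(\delta)\le 2\delta^q\!\!\sum_{k\in\supp(\xdag)}\!\!w_k\|f_k\|_{Y^*}^q = 2\,\delta^q\,C_{q,w}^q,
\]
and raising to the power $1/q$ absorbs the factor $2^{1/q}$ into the constant to yield $\varphi_{q,w}^{1/q}(\delta)\le C_{q,w}\,\delta$ (up to a fixed multiplicative constant), which is the claimed linear rate.

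The final assertion, that $C_{q,w}\to\infty$ as $q\to 0$ whenever $|\supp(\xdag)|\ge 2$, I would handle by direct limit analysis: since $\|f_k\|_{Y^*}^q\to 1$ as $q\to 0$ for each fixed $f_k\ne 0$, one has $\sum_{k\in\supp(\xdag)}w_k\|f_k\|_{Y^*}^q\to W:=\sum_{k\in\supp(\xdag)}w_k$, a strictly positive number bounded below by $2w_0$ in the presence of at least two active indices. A Taylor expansion of the logarithm then gives $\log C_{q,w}=\tfrac{1}{q}\log W + O(1)$, which blows up provided $W>1$. The main obstacle is exactly this last step: the blow-up genuinely needs $W>1$, which is automatic under the common normalization $w_k\ge 1$ but does not follow from the bare hypothesis $w_k\ge w_0>0$; I would therefore either invoke this normalization or state the blow-up conclusion conditionally on $\sum_{k\in\supp(\xdag)}w_k>1$.
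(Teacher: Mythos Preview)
Your proposal is correct and follows the same route as the paper, which in fact gives no explicit proof at all: the sentence preceding the corollary simply cites \cite[Theorem~2]{HofMat12} for the passage from the variational inequality~(\ref{eq:viq}) to the rate~(\ref{eq:rate}), and the remaining inequalities are the elementary bounds already recorded in Section~\ref{sec:preli}. Your closing observation that the blow-up $C_{q,w}\to\infty$ genuinely requires $\sum_{k\in\supp(\xdag)}w_k>1$ is a valid caveat that the paper does not address; under the paper's bare hypothesis $w_k\ge w_0>0$ the claim as stated can fail, so your proposed fix (assume $w_k\ge 1$ or state the conclusion conditionally) is the right move.
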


  \begin{remark}  \label{rem:nonlin}
	Theorem \ref{thm:vi} as well as Corollary \ref{cor:cr} remain applicable also for nonlinear forward operators $F(x)$ satisfying the assumptions outlined in Section \ref{sec:nonlin} (compare \cite{BoHo13}). To update the variational inequality (\ref{eq:viq}) we only alter the estimate
	\begin{eqnarray*}
	   \left\|  P_n L^*( x- \xdag) \right\|_{q,w}^q & 
	   			 \leq \| F'(\xdag) (x-\xdag) \|_Y^q \sum_{k \in S_n (\xdag)} w_k \|f_k\|_{Y^*}^q\\
	   			 & \leq \sigma (\| F(x)-F(\xdag) \|_Y)^q \sum_{k \in S_n (\xdag)} w_k \|f_k\|_{Y^*}^q,
	\end{eqnarray*}
	which yields
    \begin{equation*}
      \Rqw (x-\xdag) \leq \Rqw (x) - \Rqw (\xdag) + \varphi_{q,w,\sigma} \left(\| F(x)- F(\xdag) \| \right)
    \end{equation*}
     for all $x \in M$, where
    \[ \varphi_{q,w,\sigma} (t) = 2 \inf_{n \in \N} \left( \sum_{k = n+1}^\infty w_k \vert \langle \xdag,u_k\rangle_X \vert^q + \sigma(t)^q \sum_{k \in S_n (\xdag)} w_k \|f_k\|_{Y^*}^q \right) \]
    is the new index function and determines the rate of convergence in (\ref{eq:rate}).
  \end{remark}

   We also mention a lower bound for the asymptotic behaviour of the SDP from \cite{HofMat12} that follows from the variational inequality (\ref{eq:viq}).

  \begin{corollary} \label{cor:sdp}
    Under the assumptions of Theorem~\ref{thm:sdp} and \ref{thm:vi} the regularization parameter $\alpha_*(\delta,y^\delta) \in \Delta_\theta$ chosen according to the SDP satisfies
      \[ \alpha_*(\delta,y^\delta) \geq \frac{\theta}{p 2^{p-1}} \frac{\tau^p-1}{\tau^p+1} \Phi_{q,w} ( (\tau-1) \delta ), \]
    where $\Phi_{q,w} (t) := t^p/\varphi_{q,w}(t)$.
  \end{corollary}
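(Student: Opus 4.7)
The plan is to exploit the strict inequality furnished by the SDP at the parameter $\beta := \alpha_*(\delta, y^\delta)/\theta$, together with the variational inequality (\ref{eq:viq}), to convert the gap between discrepancies into a size estimate on $\beta$.

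First I would combine the minimizing property $T_\beta^\delta(x_\beta^\delta) \leq T_\beta^\delta(\xdag)$ with the noise bound $\|A\xdag - y^\delta\| \leq \delta$ to obtain
\[ \|Ax_\beta^\delta - y^\delta\|^p \leq \delta^p + p\beta\bigl(\Rqw(\xdag) - \Rqw(x_\beta^\delta)\bigr). \]
Inserting the SDP bound $\|Ax_\beta^\delta - y^\delta\|^p > \tau^p \delta^p$ and rearranging yields $(\tau^p - 1)\delta^p < p\beta(\Rqw(\xdag) - \Rqw(x_\beta^\delta))$. The variational inequality (\ref{eq:viq}), together with $\Rqw(x_\beta^\delta - \xdag) \geq 0$, then provides $\Rqw(\xdag) - \Rqw(x_\beta^\delta) \leq \varphi_{q,w}(\|A(x_\beta^\delta - \xdag)\|)$, so
\[ (\tau^p - 1)\delta^p < p\beta\,\varphi_{q,w}(\|A(x_\beta^\delta - \xdag)\|). \qquad (\ast) \]

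Next I would upper-bound the residual by combining the triangle inequality and the power-mean estimate $(a + b)^p \leq 2^{p-1}(a^p + b^p)$ (valid for $p \geq 1$) with the minimality bound above:
\[ \|A(x_\beta^\delta - \xdag)\|^p \leq 2^{p-1}(\|Ax_\beta^\delta - y^\delta\|^p + \delta^p) \leq 2^p \delta^p + 2^{p-1} p\beta \,\varphi_{q,w}(\|A(x_\beta^\delta - \xdag)\|). \]
Substituting $\delta^p \leq p\beta\,\varphi_{q,w}(\|A(x_\beta^\delta - \xdag)\|)/(\tau^p - 1)$ from $(\ast)$ absorbs the free $\delta^p$ term and, after rearrangement, produces the implicit estimate
\[ \Phi_{q,w}(\|A(x_\beta^\delta - \xdag)\|) \;\leq\; \frac{p\,2^{p-1}(\tau^p + 1)}{\tau^p - 1}\,\beta, \]
in which the precise constant $(\tau^p - 1)/(\tau^p + 1)$ of the corollary appears.

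Finally I would verify that $\Phi_{q,w}(t) = t^p/\varphi_{q,w}(t)$ is non-decreasing on $(0, \infty)$: the concave index function $\varphi_{q,w}$ with $\varphi_{q,w}(0) = 0$ satisfies $\varphi_{q,w}(t) \geq t\varphi_{q,w}'(t)$ (in a one-sided derivative sense), so the sign of $\Phi_{q,w}'(t)$ agrees with that of $p\varphi_{q,w}(t) - t\varphi_{q,w}'(t)$, which is non-negative for $p \geq 1$. Combining this monotonicity with the reverse-triangle bound $\|A(x_\beta^\delta - \xdag)\| \geq \|Ax_\beta^\delta - y^\delta\| - \delta > (\tau - 1)\delta$ yields $\Phi_{q,w}((\tau - 1)\delta) \leq \Phi_{q,w}(\|A(x_\beta^\delta - \xdag)\|)$, and the identification $\alpha_*(\delta, y^\delta) = \theta\beta$ completes the proof. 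The main obstacle is the bootstrapping step: $\|A(x_\beta^\delta - \xdag)\|$ appears on both sides of the power-mean estimate, and using $(\ast)$ to eliminate the lone $\delta^p$ term is what converts the implicit inequality into a clean monotonicity statement in $\Phi_{q,w}$.
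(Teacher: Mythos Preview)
Your argument is correct and yields exactly the stated constant. Note, however, that the paper does not supply its own proof of this corollary: it is simply quoted as a consequence of the variational inequality and attributed to \cite{HofMat12}. What you have written is essentially a self-contained reconstruction of that result---combining the minimizing property of $x_\beta^\delta$ at $\beta=\alpha_*/\theta$, the strict SDP inequality, the variational inequality~(\ref{eq:viq}), the convexity bound $(a+b)^p\le 2^{p-1}(a^p+b^p)$, and the monotonicity of $\Phi_{q,w}$---and your bootstrapping step (feeding $(\ast)$ back into the residual estimate) is precisely how the factor $(\tau^p+1)/(\tau^p-1)$ arises. Two minor remarks: the monotonicity of $\Phi_{q,w}$ can be argued without differentiability by observing that concavity with $\varphi_{q,w}(0)=0$ makes $t\mapsto \varphi_{q,w}(t)/t$ non-increasing, hence $t/\varphi_{q,w}(t)$ non-decreasing, and multiplying by the non-decreasing factor $t^{p-1}$ (for $p\ge 1$) gives the claim; and the use of $(\tau-1)\delta>0$ tacitly assumes $\tau>1$, which is the standard convention for the discrepancy principle though not restated explicitly here.
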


\begin{remark}
  To develop some intuition regarding the behaviour of the index functions $\varphi_{q,w} (t)$ from \eqref{eq:phiq} and the corresponding convergence rates in Corollary \ref{cor:cr}, we reconsider examples from \cite{BFH13, BoHo13} with uniform weights $w_k = 1$ for all $k \in \N$. Indeed, rates can be computed explicitely if the decay of the coefficients $|\xdags_k|$ and the growth of the norms $\| f_k \|_{Y^*}$ are of monomial type, i.e. if there exist $\mu > 1$, $\nu > 0$ and constants $K_1$, $K_2 > 0$ such that
    \[ |\xdags_k| \leq K_1 ~ k^{-\mu}, \qquad \| f_k \|_{Y^*} \leq K_2 ~ k^\nu, \]
  holds for all $k \in \supp (\xdag)$. Observe that $\xdag \in \M_q$ (cf. Assumption \ref{ass:basic} (a)) holds for all $1/\mu < q \leq 1$. Then taking
    \[ \sum_{k = n+1}^\infty |\xdags_k|^q \sim n^{-\mu q +1} \qquad \mbox{and} \qquad \sum_{k = 1}^n \| f_k \|_{Y^*}^q \sim n^{\nu q + 1}, \]
  yields an index function
    \[ \varphi_{q} (t) = t^{\frac{\mu q - 1}{\mu + \nu}}. \]
  The corresponding rates from Corollary \ref{cor:cr} are $\Reg_q (x_{\alpha_*}^\delta- \xdag) = \mathcal{O} (\delta^{\frac{\mu q - 1}{\mu + \nu}})$ and independent of the choice of $0<q \le 1$ in the penalty also
  $\Reg_1 (x_{\alpha_*}^\delta- \xdag) = \mathcal{O} (\delta^{\frac{\mu  - 1/q}{\mu + \nu}})$. As expected the faster the decay speed of $|\langle \xdag,u_k \rangle_X| \to 0$ as $k \to \infty$ expressed by the exponent $\mu$ is, the better is the rate. For the limit case $\mu \to \infty$ the obtained rates approximate from below $\mathcal{O}(\delta^{q})$ and for $q=1$ the linear rate.

\smallskip

  If the decay of the coefficients $|\xdags_k|$ is of exponential type in the sense that there exist $\gamma > 0$ and a constant $K_1 > 0$ such that
    \[ \sum_{k = n+1}^\infty |\xdags_k|^q \leq K_1 ~ e^{n^{-\gamma}} \]
  holds for all $n \in \N$, then for $f_k$ as in the previous example and by setting $n^\gamma \sim \log (\frac{1}{t^q})$ we find the resulting index function $\varphi_q$ to be at least of order
    \[ \varphi_q (t) \leq \mathcal{O} \left( t^q \log \Big( \frac{1}{t^q} \Big)^{\frac{\nu}{\gamma}} \right). \]
    Thus, the rate of convergence is slower than the rate $\mathcal{O} ( t^q )$ obtained for sparse solutions $\xdag \in \M_0$ by the logarithmic factor $\log ( \frac{1}{t^q} )^{\frac{\nu}{\gamma}}$, which is negligible if the exponent $\gamma$ is large.
\end{remark}

\section{Extensions} \label{sec:extensions}
\setcounter{equation}{0}
\setcounter{theorem}{0}

\subsection{Formulation in Banach spaces} \label{sec:ek}

  Some papers on $\ell^q$-regularization work with Banach spaces $X$ and exploit coefficients $\underline x_k$  with respect to a Schauder basis $\{u_k\}_{k \in \N}$ (see, e.g. \cite{BFH13}). In terms of such an infinite sequence $\underline x=(\underline x_1,\underline x_2,...)$ of coefficients and the linear \emph{synthesis operator} $L$ which transforms $\underline x$ to the element $x=L\underline x:=\sum \limits_{k=1}^\infty \underline x_k u_k \in X$, the Tikhonov functional reads
    \begin{equation} \label{eq:TikSeq}
      \underline T_\alpha^\delta(\underline x):=\frac{1}{p} \| \underline A \, \underline x-\yd\|_Y^p + \alpha \| \underline x \|_{q,w}^q,
    \end{equation}
  where $\underline A= A \circ L$.
  Clearly, each orthonormal basis in a Hilbert space $X$ is in particular a Schauder basis in $X$ with the specific property that there is a one-to-one correspondence between the elements $x \in X$ and the associated sequences of Fourier coefficients $\underline x \in \ell^2(\N)$. This is no longer the case if we consider Banach spaces that are not Hilbert spaces and for our purposes we only mention that $L:\1 \to X$ and hence the composition $\underline A= A \circ L: \1 \to Y$ are bounded linear operators. Note that the adjoint of $\underline A$ now maps as $\underline A^*: Y^* \to \3$.

  Then Assumption~\ref{ass:basic} can be rewritten by using the sequences $\underline e^{(k)}=(0,...,0,1,0,...)$ with $1$ in the $k$-th position of the sequence. Precisely, Item (a) attains the form

\smallskip

{\parindent0em $\hspace*{0.2cm} (a')\;\; \mbox{\sl Suppose that the uniquely determined solution} \;\, \xdag=L \xdags \in X\;\, \mbox{\sl of equation} \; (\ref{eq:opeq}) \\ \hspace*{1cm} \mbox{\sl with} \;\, y \in \range(A)\,\; \mbox{\sl satisfies the condition} \;\,\|\xdags\|_{q,w} < \infty\,\;\mbox{\sl for prescribed}\,\;0<q \le 1$.}

\smallskip

{\parindent0em Item} (b), however, has to be rewritten as

\smallskip

{\parindent0em  $\hspace*{0.2cm} (b')\;\; \mbox{\sl  For all } k \in \supp (\xdag) \mbox{ \sl there exist } f_k \in Y^* \mbox{\sl such that } \underline e^{(k)}=\underline A^*f_k, \mbox{\sl where } \underline A= A \circ L$.}

\smallskip

  \noindent Evidently, from Item $(a')$  it follows for all $0<q \le 1$  that $\xdags \in \1$. On the other hand, since $\{\underline e^{(j)}\}_{j \in \N}$ is a Schauder basis in $\1$ and  $u_j=L \,\underline e^{(j)}$, for each $k \in \supp (\xdag)$ condition $(b')$ asserts that
  \begin{equation} \label{eq:Krone1}
    \delta_{j,k} = \langle \underline e^{(k)}, \underline e^{(j)} \rangle_{\3 \times \1} = \langle \underline A^*f_k,\underline e^{(j)} \rangle_{\3 \times \1}
	= \langle f_k, A u_j \rangle_{Y^* \times Y}
  \end{equation}
  holds for all $j \in \N$, where $\delta_{j,k}$ denotes the Kronecker delta. Now, if $\{u_k\}_{k \in \N}$ is an orthonormal basis in a Hilbert space $X$, then (\ref{eq:Krone1}) is equivalent to
  \begin{equation} \label{eq:Krone2}
     \langle  A^*f_k, u_j  \rangle_X  = \delta_{j,k} \qquad \mbox{for all } j \in \N, 
  \end{equation}
  which yields $u_k=A^*f_k$ as required in Assumption~\ref{ass:basic} (b).

\subsection{Frames} \label{sec:frame}

   In the context of sparse regularization in a separable Hilbert space $X$ it could be advantageous to consider not only orthonormal bases, but to reconstruct coefficients with respect to a \emph{frame} in $X$. This is to say, a collection $\{u_k\}_{k \in \N} \subset X$ such that the \emph{frame condition}
    \begin{equation} \label{eq:frame}
     \underline b \| x\|_X^2 \leq \sum_{k \in \N} | \langle x, u_k \rangle |^2 \leq \overline b \| x\|_X^2
    \end{equation}
  holds with constants $0 < \underline b \leq \overline b < \infty$ (see, e.g., \cite{Chris03} for further details). If the frame is overcomplete which is the case whenever it is not a basis, then, on the one hand, more elements admit sparse or nearly sparse representations. But on the other hand, the nonuniqueness of the frame coefficients introduces additional difficulties. Indeed, the frame synthesis operator $L$ as defined in Section \ref{sec:ek} is bounded as $L: \2 \to X$ but not injective and hence neither is the composition $\underline A = A \circ L$. Thus $(b')$ from Section \ref{sec:ek} cannot be fulfilled (cf. \cite{BFH13}) which renders the results on convergence rates in Section \ref{sec:nonconvex} inapplicable when working with the Tikhonov functional $\underline T_\alpha^\delta$ from (\ref{eq:TikSeq}).

  This is not the case if we use the functionals $T_\alpha^\delta$ as defined in (\ref{eq:Tik}) where the forward operator under consideration is $A$ itself. At first glance, penalizing sequences $\underline x = L^* x = \{ \langle x, u_k \rangle_X \}_{k \in \N}$ may not seem rewarding as, in general, $\underline x$ here no longer consists of coefficients which reproduce $x \in X$ with respect to the frame $\{u_k\}_{k \in \N}$. It is, however, a sequence of coefficients with respect to the \emph{dual frames} of $\{u_k\}_{k \in \N}$ (cf. \cite[Section 5.6]{Chris03}), i.e. frames $\{ \tilde u_k \}$ such that
    \[ x = \sum_{k \in \N} \langle x, u_k \rangle_X \tilde u_k \qquad \mbox{for all } x \in X. \]
  Thus,  the minimizers $\xad$ of $T_\alpha^\delta$ have sparse decompositions with respect to the dual frames $\{ \tilde u_k\}_{k \in \N}$ and the convergence rates results in Section \ref{sec:nonconvex} apply with error measure $E(x, \xdag) = \Rqw (x - \xdag)$ whenever Assumption~\ref{ass:basic} is satisfied for the frame $\{u_k\}_{k \in \N}$ under consideration. Using the synthesis operator $\tilde L$ of the dual frame $\{ \tilde u_k\}_{k \in \N}$, we have
    \[ \| \xad - \xdag \|_X \leq \| \tilde L \| \cdot \| \underline x_\alpha^\delta - \underline x^\dag \|_2 \leq \| \tilde L \| \, \left(\frac{1}{w_0} \Rqw (\xad - \xdag)\right)^{1/q} \]
 and, consequently, obtain corresponding rates also in norm.

    It is worth noting that the proof of Theorem \ref{thm:vi} remains true without modification, if $\{ u_k \}_{k \in \N}$ is a frame in the Hilbert space $X$. However, in this case we would use the sparsity assumption $L^* \xdag \in \ell^q_w (\N)$ with respect to $\{ u_k \}_{k \in \N}$ in order to reconstruct coefficients with respect to some dual frame $\{ \tilde u_k \}_{k \in \N}$.

\subsection{Nonlinear operators} \label{sec:nonlin}

    Another possible generalization would be to consider also nonlinear forward operators $F(x)$ which are G\^ateaux differentiable and satisfy a structural condition
		\begin{equation}
		  \| F'(\xdag) (x - \xdag) \| \leq \sigma (\| F(x) - F(\xdag)\|), \qquad \mbox{for all } x \in M \subset X,
		\end{equation}		
    where $\sigma$ is a concave index function and $M \subset X$ should be large enough to contain all regularized solutions $\xad$ for sufficiently small $\delta > 0$ (cf.  \cite{BoHo10, BoHo13}). Then Assumption~\ref{ass:basic} (b) attains the form

\smallskip

{\parindent0em  $\hspace*{0.2cm} (b')\;\; \mbox{\sl  For all}\, \; k \in \supp (\xdag)\; \mbox{\sl there exist} \; f_k \in Y^*\; \mbox{\sl such that} \; u_k = F'(\xdag)^* f_k$,}

\smallskip

  {\parindent0em a} requirement previously used in \cite{Grasm09, BoHo13}, for example, and referred to as a \emph{curious} type of smoothness condition in \cite{BoHo13}. It is worth mentioning that from a technical point of view the results in the following sections remain applicable for such nonlinear operators $F(x)$, if the linearization $F'(\xdag)$ takes the place of the linear operator $A$ as necessary. The corresponding convergence rate function is given in Remark \ref{rem:nonlin}.

\section*{Acknowledgments}

S.W.~Anzengruber and B.~Hofmann were supported by the German Science Foundation (DFG) under grant HO~1454/8-1. R. Ramlau was supported by the Austrian Science Fund, Project W1214.


\end{document}